\tikzset{
  commutative diagrams/.cd,
  arrow style=tikz,
  diagrams={>={Computer Modern Rightarrow[length=5pt,width=3pt]}},
}
 \theoremstyle{thmstyleone}%
\newtheorem{theorem}{Theorem}[section]
\newtheorem*{theorem*}{Theorem}
\newtheorem{definition}[theorem]{Definition}
\newtheorem{remark}[theorem]{Remark} 
\newtheorem{remarks}[theorem]{Remarks}
\newtheorem{lemma}[theorem]{Lemma}
\newtheorem{proposition}[theorem]{Proposition}
\newtheorem{corollary}[theorem]{Corollary}
\newtheorem{Fubini's theorem}[theorem]{Fubini's theorem}
\newcommand{\K}{\mathsf{K}} 
\newcommand{\Loc}{\mathsf{Loc}}
\newcommand{\KHLoc}{\mathsf{KHLoc}}
\newcommand{\HLoc}{\mathsf{HLoc}}
 \newcommand{\KRLoc}{\mathsf{KRLoc}}
\newcommand{\kHLoc}{\mathsf{kHLoc}}
\newcommand{\CRLoc}{\mathsf{CRLoc}}
\newcommand{\Set}{\mathsf{Set}}
\newcommand{\Top}{\mathsf{Top}}
\newcommand{\Hom}{\mbox{Hom}}
\newcommand{\xra}{\xrightarrow}
\newcommand{\sra}{\rightarrow}
\newcommand{\ra}{\longrightarrow}
\newcommand{\Ra}{\Rightarrow}
\newcommand{\st}{\stackrel}
\newcommand{\tm}{\times}
\newcommand{\colim}{\mathsf{colim}}
\newcommand{\lm}{\mathsf{lim}}
\newcommand{\ma}{\mathcal{A}}
\newcommand{\mk}{\mathcal{K}}
\newcommand{\mw}{\mathcal{W}}
\newcommand{\mi}{\mathcal{I}}
\newcommand{\mj}{\mathcal{J}}
\newcommand{\mb}{\mathcal{B}}
\newcommand{\mc}{\mathcal{C}}
\newcommand{\mr}{\mathcal{R}}
\newcommand{\mq}{\mathcal{Q}}
\newcommand{\mpp}{\mathcal{P}}
\newcommand{\msl}{\mathcal{S}\ell}
 \newcommand{\oppair}[4]{\xymatrix{#1 \ar@<.5ex>[r]^-{#3} &{#2}\ar@<.5ex>[l]^-{#4}}}
\newcommand{\oppairi}[4]{\xymatrix@1{#1 \ar@<.5ex>[r]^{#3} &{#2}\ar@<.5ex>[l]^{#4}}}
\begin{document}

\title{A convenient category of locales }
 
\author{Moncef Ghazel\orcidlink{0000-0002-0006-945X} \footnote{Faculté des Sciences de Tunis,
          University of Tunis El manar, moncef.ghazel@fst.utm.tn}
\and Inès Saihi\orcidlink{0000-0003-4102-6381} \footnote{Faculté des Sciences de Tunis,
          University of Tunis El manar, ines.saihi@fst.utm.tn}
\and Walid Taamallah\orcidlink{0009-0006-8723-0264}  \footnote {Institut préparatoire aux études d'ingénieurs El Manar,
          University of Tunis El Manar,  
walid.taamallah@ipeiem.utm.tn
}}

\date{ }

\maketitle

\begin{abstract}
 The notion of Kan extendable subcategories was initially introduced to define the category of compactly generated fibrewise topological spaces over a $T_1$
base space and to establish its cartesian closure. In this paper, we show that the same framework can likewise be applied to define the category of compactly generated strongly Hausdorff locales and to prove that it, too, is cartesian closed.
\end{abstract}

\noindent{\bf Mathematics Subject Classification 2020:}{ 18F70, 	18A40, 	18C15}\\

\noindent{\bf Keywords: }{ Locale, Strongly Hausdorff locale, Compact locale, Hofmann-Lawson Duality, Kan extension, density comonad.} 

 \tableofcontents

\section*{Introduction}
In many branches of mathematics, work is conducted within a certain category. It  often happens that the category under study lacks important categorical properties, making it difficult to construct objects and morphisms. The search for good alternatives to such categories has stimulated extensive research over the last fifty years \cite{BH,  HSS, RC, RE, SN, SP, GM}. A replacement of the original category is regarded as convenient if it satisfies the essential properties of a category—namely, being bicomplete and cartesian closed—and contains models for the interesting examples of the original category.

In the literature, there are essentially three ways to construct a good replacement for a category lacking the desired properties. The first consists in using the good objects of the original category to generate a new well-behaved subcategory \cite{SN, SP,RC,BH,GM}. The second method is to find a fully faithful embedding of the original category into a good one. An example of such is the Yoneda embedding which injects any locally small category $\mc$ into  the category of presheaves over $\mc$  which is bicomplete and cartesian closed   \cite[Corollary 2.2.8]{RE}. The third method is to construct a completely new convenient category that can be related to the original category in an appropriate way \cite{HSS}.

Topology concerns the category of topological spaces and continuous maps. Although bicomplete, the category of topological spaces, denoted by $\Top$, has certain shortcomings: besides not being cartesian closed, in this category the product of quotient maps is not a quotient map. The category of compactly generated topological spaces and its variants is perhaps one of the earliest and most prominent examples of convenient categories \cite{RC, BR, SN, SP}. It restores cartesian closure, and binary products in it respect quotient maps.

The first author extended the notion of compactly generated spaces to the fibrewise setting \cite{GM}. He then proved that these spaces form a cartesian closed category. We now give a brief outline of his method.

Let $\mw$ be any subcategory of a category $\mc$. Assume that the inclusion functor $J: \mw \to \mc$ has an idempotent density comonad $T$, and let $\mw_l[\mc]$ be the category of $T$-coalgebras. Then the forgetful functor $U:\mw_l[\mc] \to \mc$ is a fully faithful embedding \cite[Proposition 2.2.]{GM}. Therefore, the category $\mw_l[\mc]$ can be viewed as a full subcategory of $\mc$. Assume that the objects of $\mw$ are exponentiable in $\mc$. Then, under  mild conditions, it can be shown that $\mw_l[\mc]$ is cartesian closed \cite[Theorem 9.6.]{GM}. By taking $\mc$ to be the category $\Top$ and $\mw$ to be the category of compact spaces, one obtains the category of compactly generated spaces as $\mw_l[\mc]$. The same considerations apply to the fibrewise situation.

Point-free topology is an approach to topology based on the study of the complete lattice formed by the open sets of a given topological space \cite{PP, PPS, PPT}. Complete lattices and their appropriate morphisms form the category of locales, denoted by $\Loc$. It is closely related to the category of topological spaces and allows for topological constructions using purely algebraic methods.

Despite being bicomplete, the category $\Loc$ inherits certain deficiencies from $\Top$. Notably, $\Loc$ fails to be cartesian closed, for by a theorem of Hyland, a locale is exponentiable iff it is continuous \cite[Theorem 1]{HM}. This fact highlights the need for a point-free counterpart of the category of compactly generated Hausdorff spaces described above. The significance of introducing such a category was already emphasized by Johnstone \cite{J}, and the first steps in this direction were taken by Escardó \cite{EM}.

Our main objective in this work is to show that the method   described above can be used to define the category of compactly generated strongly Hausdorff locales, and to establish its cartesian closure. This category will be denoted by $\kHLoc$.

The paper is organized as follows: In Section \ref{s1}, we briefly review the notion of Kan-extendable subcategories and summarize their main properties. Section \ref{s2}  contains a brief account of final functors and their properties. In section \ref{s3}, we show that a certain functor arising in the theory of locales is final.
This result will play a critical role in Section \ref{s4}, where we prove the existence and idempotence of the density comonad $T$ used in the definition of the category $\kHLoc$.  
In Section \ref{s5}, we define the category $\kHLoc$ of compactly generated strongly Hausdorff locales as the category of $T$-coalgebras and investigate some of its properties. In particular, we show that $\kHLoc$ is at least as large as the one defined by Escardó. We also use Hofmann–Lawson duality to prove that every continuous strongly Hausdorff locale is compactly generated. In the last two sections,  we prove that $\kHLoc$ is cartesian closed.
\subsection*{Conventions and notations}
Throughout this paper, the product of two categories $\ma$ and $\mb$ is denoted by $\ma \tm \mb$. {\bf A subcategory $\mb$ of a category $\mc$ is always assumed to be full.} Given two objects $X, Y \in \mc$, the set of morphisms from $X$ to $Y$ is denoted by $\mc(X, Y)$.

When it exists, the cartesian product of two objects $X$ and $Y$ in $\mc$ is denoted by $X \times_{\mc} Y$. If $X$ and $Y$ belong to a subcategory $\mb$ of $\mc$, their product $X \times_{\mb} Y$ in $\mb$ may differ from $X \times_{\mc} Y$ and should not be confused with it. Observe that if $X \times_{\mc} Y$ exists and is isomorphic to an object of $\mb$, then $X \times_{\mb} Y$ exists and $X \times_{\mathcal{B}} Y\cong X \times_{\mathcal{C}} Y$.

\section{Kan-extendable subcategories}\label{s1}
 The purpose of this section is to recall the notion of Kan-extendable subcategories and to collect their properties that will be used in the sequel. The presentation is arranged to be self-contained, so that reference to \cite{GM} is required only for the proofs. 
 
 While the codensity monad has been studied more extensively, our focus here is on its dual notion: the density comonad. We briefly recall its definition and principal properties. Full expositions of the notion can be found in Leinster \cite[Sections 2,5 and 6]{LT} and Dubuc \cite[page 68]{DE}.
 
 Let $F:\ma \to \mb$ be a functor and assume that $F$ has a left pointwise Kan extension along itself $(T,\eta)$. The density comonad of $F$ is the comonad $(T,\epsilon, \delta)$, where 
 \begin{itemize}
 	\item  Its counit is the natural transformation $\epsilon: T \Ra 1_{\mb}$ given by the universal property of $(T,\eta)$ with respect to the natural transformation
 	\begin{equation}\label{x165}
 		1_F: F \Ra 1_{\mb}F. 
 	\end{equation}
 	 That is, $\epsilon$    is the unique natural transformation from $T$ to $1_{\mb}$ satisfying
 	  \begin{equation}\label{x166}
 	  \epsilon F.\eta= 1_{F}. 
 	  \end{equation}
 	  
 	 \item Its comultiplication is the  natural transformation $\delta: T\Ra T^2$ given by the universal property of $(T,\eta)$ with respect to the composite natural transformation 
 	 \begin{equation}\label{x167}
 	    F \overset{\eta}{\Ra} TF \overset{L\eta}{\Ra} T^2F.  
 	 \end{equation}
 	That is, $\delta$    is the unique natural transformation from $T$ to $T^2$ satisfying
 	\begin{equation}\label{x168}
 	 \delta F.\eta= T\eta.\eta. 
 	\end{equation}
 	\end{itemize}
 	   Recall that a comonad $(T, \epsilon, \delta)$ of a category $\mc$ is said to be idempotent if its comultiplication $ \delta: T \ra T^2$ is an isomorphism.
 \begin{definition} \cite[Definition 3.1.]{GM} \\
 	A subcategory $\mw$ of a category $\mc$ is said to be left Kan-extendable if: 
 		\begin{enumerate}
 		\item  The inclusion functor $J:\mw \to \mc$ has a pointwise left Kan extension.
 		\item The density comonad  $(T,\epsilon, \delta)$ of the functor $J$ is idempotent.
 		\end{enumerate}
 	\end{definition}
 	Let $\mw$ be a left Kan-extendable subcategory of $\mc$ and let $(T,\epsilon,\delta)$ be the  density comonad of the inclusion functor 
 	$J:\mw \ra \mc$.  The category  of $T$-coalgebras of $\mc$ is 
 	 denoted by $\mw_l[\mc]$. By \cite[Proposition 2.2]{GM}, the forgetful functor
 	 \begin{equation}\label{x169}
 	 U: \mw_l[\mc]\to \mc 
 	 \end{equation}
 	 is fully faithful injective on objects. Therefore, the category $\mw_l[\mc]$ is viewed as a subcategory of $\mc$ which is replete. It is called the subcategory of $\mw$-generated objects of $\mc$. 
 	 \begin{proposition}\label{x85} \cite[Proposition 3.3.]{GM} \\
 	 	Let $\mw$ be a left Kan extendable subcategory of  $\mc$, $(T,\epsilon,\delta)$ the density comonad of the inclusion functor $J:\mw \ra \mc$ and $U:\mw_l[\mc]\ra \mc$ the forgetful functor. Then: 
 	 	
 	 	\begin{enumerate}
 	 		
 	 		\item  The subcategory $\mw_l[\mc]$ is coreflective  in $\mc$.
 	 		
 	 		\item  The free $T$-coalgebra functor $F_{T}: \mc \ra \mw_l[\mc]$ is a coreflector.
 	 		\item  The coreflection $(U \dashv F_{T})$ has  $\epsilon$ as its counit. 
 	 		
 	 	\end{enumerate}
 	 \end{proposition}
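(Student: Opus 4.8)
The plan is to recognize all three assertions as consequences of a single standard fact: for \emph{any} comonad $(T,\epsilon,\delta)$ on $\mc$, the forgetful functor $U$ from the category of $T$-coalgebras is left adjoint to the free $T$-coalgebra functor $F_T$. The idempotence hypothesis on $T$ enters only through \cite[Proposition 2.2]{GM}, which guarantees that $U$ is fully faithful and injective on objects; this is precisely what upgrades the adjunction into a coreflection onto a full subcategory.

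First I would fix the free functor $F_T\colon\mc \ra \mw_l[\mc]$, sending an object $X$ to the coalgebra $(TX,\delta_X)$ and a morphism $h$ to $Th$; that $(TX,\delta_X)$ is a genuine $T$-coalgebra is the content of the comonad axioms, and that $Th$ is a coalgebra morphism is the naturality of $\delta$. I would then establish $U \dashv F_T$ by producing, for every coalgebra $(Y,\beta)$ and every object $X\in\mc$, a natural bijection
\[
\mc\big(U(Y,\beta),\,X\big)\;\cong\;\mw_l[\mc]\big((Y,\beta),\,F_T X\big),
\]
sending $f\colon Y\ra X$ to $Tf\circ\beta$ in one direction and $g\colon Y\ra TX$ to $\epsilon_X\circ g$ in the other. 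A few routine diagram chases, each using only the comonad identities and the two coalgebra axioms for $\beta$ (the counit law $\epsilon_Y\circ\beta=1_Y$ and coassociativity $\delta_Y\circ\beta=T\beta\circ\beta$), show that $Tf\circ\beta$ lands among coalgebra morphisms and that the two assignments are mutually inverse and natural. Feeding the identity of $F_TX=(TX,\delta_X)$ through the reverse map reads off the counit of the adjunction at $X$ as $\epsilon_X\circ 1_{TX}=\epsilon_X$, so the counit is exactly the comonad counit $\epsilon$; this is assertion (3). (Symmetrically, the unit at $(Y,\beta)$ is $\beta$ itself, viewed as a morphism $(Y,\beta)\ra F_TU(Y,\beta)$.)

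Finally, I invoke the full faithfulness and injectivity on objects of $U$ to identify $\mw_l[\mc]$ with a full, replete subcategory of $\mc$ whose inclusion is $U$. Under this identification the adjunction $U\dashv F_T$ says exactly that the inclusion admits the right adjoint $F_T$, that is, $\mw_l[\mc]$ is coreflective with coreflector $F_T$; these are assertions (1) and (2). The one point deserving attention, rather than mechanical verification, is the role of idempotence: without it the adjunction $U\dashv F_T$ would still hold, but $U$ would not be fully faithful and $\mw_l[\mc]$ would fail to present itself as a coreflective subcategory of $\mc$. Here that gap is already closed by \cite[Proposition 2.2]{GM}, so no further argument about idempotence is needed at this stage.
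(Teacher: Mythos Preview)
Your argument is correct: the adjunction $U\dashv F_T$ with counit $\epsilon$ is the standard Eilenberg--Moore fact for an arbitrary comonad, and idempotence enters only via \cite[Proposition~2.2]{GM} to make $U$ fully faithful, turning the adjunction into a genuine coreflection. The paper itself gives no proof of this proposition---it is quoted verbatim from \cite[Proposition~3.3]{GM}---so there is no in-paper argument to compare against; your write-up is precisely the expected proof.
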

 	 \begin{proposition}\label{x87} \cite[Proposition 3.4.]{GM} \\
 		Let $\mw$ be a left Kan extendable subcategory of $\mc$. 
 		\begin{enumerate}
 			
 			\item The inclusion functor $\mw_l[\mc]\st{U}{\ra}\mc$ creates all colimits that $\mc$ admits.
 			\item The subcategory $\mw_l[\mc]$ has all limits that $\mc$ admits formed by applying the  coreflector $F_{T}$ to the limit in $\mc$.
 		\end{enumerate}
 		In particular, if $\mc$ is either complete, cocomplete or bicomplete, then so is $\mw_l[\mc]$.
 	\end{proposition}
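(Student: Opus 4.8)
The plan is to exploit the structure recorded in Proposition \ref{x85}: the subcategory $\mw_l[\mc]$ is by definition the category of coalgebras of the density comonad $(T,\epsilon,\delta)$, the functor $U$ is the associated forgetful functor, and $U$ is fully faithful with right adjoint the coreflector $F_T$, the counit of the adjunction $(U \dashv F_T)$ being $\epsilon$. Part (1) will be handled as an instance of the general principle that a forgetful functor out of a category of coalgebras creates all colimits existing in the base; part (2) will follow from a short computation with hom-sets that uses only the adjunction $U \dashv F_T$ together with the full faithfulness of $U$. The final ``in particular'' is then immediate.

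For (1), let $D \colon I \to \mw_l[\mc]$ be a diagram, write $D_i = (X_i,\alpha_i)$ with coalgebra structure $\alpha_i \colon X_i \to TX_i$, and suppose the colimit $C = \colim_\mc UD$ exists in $\mc$ with colimiting cocone $\lambda_i \colon X_i \to C$. First I would observe that the maps $T\lambda_i \circ \alpha_i \colon X_i \to TC$ form a cocone on $UD$ (this uses both that the $\lambda_i$ are compatible and that the morphisms in the image of $D$ are coalgebra maps), so the universal property of $C$ produces a unique $\gamma \colon C \to TC$ with $\gamma \circ \lambda_i = T\lambda_i \circ \alpha_i$ for all $i$. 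Next I would verify, in each case by appealing to the \emph{uniqueness} clause of the universal property of $C$, that $\gamma$ satisfies the counit law $\epsilon_C \circ \gamma = 1_C$ and the coassociativity law $\delta_C \circ \gamma = T\gamma \circ \gamma$, so that $(C,\gamma)$ is a $T$-coalgebra, and that each $\lambda_i$ is then a morphism of coalgebras. Finally, a routine check shows that $(C,\gamma)$ equipped with the $\lambda_i$ is a colimit of $D$ in $\mw_l[\mc]$ and that $\gamma$ is the only coalgebra structure on $C$ turning the $\lambda_i$ into coalgebra morphisms; together these give creation, not merely preservation. This is exactly the dual of the classical theorem that the forgetful functor from Eilenberg--Moore algebras creates limits. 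Since $T$ is idempotent one may alternatively shortcut the verification by checking only that $\epsilon_C$ is an isomorphism, as an object lies in $\mw_l[\mc]$ precisely when its component of $\epsilon$ is invertible.

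For (2), let $D \colon I \to \mw_l[\mc]$ be a diagram whose underlying diagram $UD$ has a limit $L = \lm_\mc UD$ in $\mc$. I claim that $F_T(L)$, with the cone obtained by coreflecting the limiting cone, is the limit of $D$ in $\mw_l[\mc]$. To see this, I would compute, for an arbitrary object $W \in \mw_l[\mc]$ and naturally in $W$,
\[
\mw_l[\mc]\bigl(W, F_T(L)\bigr) \;\cong\; \mc\bigl(UW, L\bigr) \;\cong\; \lm_i\, \mc\bigl(UW, UD_i\bigr) \;\cong\; \lm_i\, \mw_l[\mc]\bigl(W, D_i\bigr),
\]
where the first isomorphism is the adjunction $U \dashv F_T$, the second is the universal property of $L$ in $\mc$, and the third is the full faithfulness of $U$. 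Unravelling the composite isomorphism shows that cones over $D$ with vertex $W$ in $\mw_l[\mc]$ correspond bijectively and naturally to morphisms $W \to F_T(L)$, which is precisely the universal property exhibiting $F_T(L)$ as $\lm_{\mw_l[\mc]} D$. Thus every limit that $\mc$ admits is obtained in $\mw_l[\mc]$ by applying $F_T$ to the limit computed in $\mc$.

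Combining the two parts yields the ``in particular'' statement at once: if $\mc$ is cocomplete then part (1) supplies all colimits of $\mw_l[\mc]$, if $\mc$ is complete then part (2) supplies all limits, and the bicomplete case is the conjunction of the two. The step I expect to require the most care is the creation claim in part (1)---that is, showing the lifted coalgebra structure $\gamma$ is forced and that the lifted cocone is genuinely colimiting, rather than merely that $U$ preserves whatever colimits $\mw_l[\mc]$ happens to possess; all of this, however, reduces to repeated use of the uniqueness half of the colimit's universal property. Part (2), by contrast, is a purely formal consequence of having a fully faithful left adjoint $U$.
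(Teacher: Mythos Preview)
Your proof is correct. Note, however, that the paper does not actually supply its own proof of this proposition: it is simply quoted from \cite[Proposition~3.4]{GM}, so there is nothing in the present paper to compare against line by line.

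That said, your argument is the standard one and almost certainly coincides with what appears in \cite{GM}. Part~(1) is exactly the dual of the classical fact that the Eilenberg--Moore forgetful functor creates limits, and your outline of the construction of the lifted coalgebra structure $\gamma$ together with the verification of the counit and coassociativity laws via uniqueness is the canonical route. Part~(2) is the routine observation that a full coreflective subcategory inherits limits by applying the coreflector, and your hom-set computation is the cleanest way to see this. The only remark worth making is that, since $T$ is idempotent, the entire verification in part~(1) can indeed be shortened to checking that $\epsilon_C$ is invertible (as you note), which avoids the coassociativity check entirely; this is likely how the original reference phrases it as well.
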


 	\begin{corollary}\label{x96}\cite[Corollary 3.5.]{GM} \\
 		 Let $\mw$ be a left Kan extendable subcategory of $\mc$ and $C\in\mc$. Then the following two properties are equivalent:
 		\begin{enumerate}
 			
 			\item The object $C$ is $\mw$-generated.
 			\item  There exists a functor $F:\mk\ra\mw$ such that $C\cong\colim JF$, where  $J:\mw\to\mc$ is the inclusion functor.
 		\end{enumerate}
 		\end{corollary}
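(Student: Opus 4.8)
The plan is to establish the two implications separately. The engine for $(1)\Rightarrow(2)$ will be the pointwise colimit formula for the density comonad $T$, while $(2)\Rightarrow(1)$ will rest on the colimit-creation property recorded in Proposition \ref{x87}. Throughout I will use the standard fact that for an idempotent comonad $(T,\epsilon,\delta)$ an object $C$ lies in the coalgebra category $\mw_l[\mc]$ if and only if the counit $\epsilon_C\colon TC\ra C$ is an isomorphism, the coalgebra structure then being $\epsilon_C^{-1}$.

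For $(1)\Rightarrow(2)$, I would start from the assumption that $C$ is $\mw$-generated, so that $\epsilon_C$ is an isomorphism and hence $C\cong TC$. Then I would unwind the definition of $T$ as the pointwise left Kan extension of $J$ along itself: by the pointwise formula, $TC$ is the colimit of the composite $J\circ\pi$, where $\pi\colon(J\downarrow C)\ra\mw$ is the projection from the comma category whose objects are pairs $(W,f\colon JW\ra C)$ with $W\in\mw$. Setting $\mk:=(J\downarrow C)$ and $F:=\pi$ then gives $C\cong TC\cong\colim JF$, which is exactly assertion (2).

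For $(2)\Rightarrow(1)$, the first step is to verify that $\mw\subseteq\mw_l[\mc]$, i.e. that every object of $\mw$ is $\mw$-generated. For $W\in\mw$ the comma category $(J\downarrow JW)$ has the terminal object $(W,1_{JW})$—here I use that the inclusion $J$ is fully faithful—so the pointwise colimit computing $TW$ is attained at this terminal object and the coprojection $\eta_W\colon W\ra TW$ is an isomorphism; combined with the identity $\epsilon_W\circ\eta_W=1_{W}$ coming from \eqref{x166}, this forces $\epsilon_W$ to be an isomorphism, so $W$ is $\mw$-generated. Given now a presentation $C\cong\colim JF$ with $F\colon\mk\ra\mw$, I would regard $F$ as a diagram valued in $\mw_l[\mc]$ and apply Proposition \ref{x87}(1): since the inclusion $U\colon\mw_l[\mc]\ra\mc$ creates every colimit that $\mc$ admits, the colimit $\colim JF\cong C$ formed in $\mc$ must land in $\mw_l[\mc]$. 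As $\mw_l[\mc]$ is replete, $C$ is itself $\mw$-generated.

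I expect the only genuinely delicate point to be the manipulation of the pointwise Kan extension formula, and in particular the verification that it is the counit $\epsilon_W$ itself—not merely some abstract isomorphism $TW\cong W$—that is invertible for $W\in\mw$; this is what legitimizes lifting the diagram $F$ into $\mw_l[\mc]$. The remaining ingredients, namely the coalgebra criterion for idempotent comonads and the closure of $\mw_l[\mc]$ under $\mc$-colimits, are available directly from the definitions and from Proposition \ref{x87}.
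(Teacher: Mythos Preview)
Your argument is correct. The paper does not supply a proof of this corollary; it simply records the statement with a citation to \cite[Corollary 3.5]{GM}. Your approach is the natural one and almost certainly coincides with the original: the direction $(1)\Rightarrow(2)$ unwinds the pointwise colimit formula $TC\cong\colim_{(J\downarrow C)}J\pi$ together with the idempotent-comonad criterion $C\in\mw_l[\mc]\Leftrightarrow\epsilon_C$ invertible (the paper invokes exactly this criterion later, as ``the dual of \cite[Proposition 2.3]{GM}''), while $(2)\Rightarrow(1)$ first checks $\mw\subseteq\mw_l[\mc]$ via the terminal object $(W,1_{JW})$ of $(J\downarrow JW)$ and \eqref{x166}, and then closes under colimits using Proposition~\ref{x87}(1). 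The care you take to show that it is $\epsilon_{JW}$ itself, not merely some isomorphism $TJW\cong JW$, that is invertible is exactly what is needed to lift $F$ into $\mw_l[\mc]$, and the repleteness of $\mw_l[\mc]$ (noted after \eqref{x169}) handles the final passage from $\colim JF$ to $C$.
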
 
 	 Recall that an object $E$ of a category $\mc$ is said to be exponentiable if for each $X\in \mc$, the binary product $X\tm_{\mc}E$ exists and 
 the functor
 \begin{equation}\label{x170}
  -\tm_{\mc} E:\mc\ra\mc
 \end{equation}
   has a right adjoint.

 \begin{lemma}\label{x86}\cite[Lemma 9.2.]{GM}\\
 	Let $\mw$ be a left Kan extendable subcategory of a bicomplete category $\mc$. Assume that  
 	\begin{enumerate}
 		\item  Every object in $\mw$ is exponentiable in $\mc$.
 		\item For all $V,W \in \mw$, the object $V\times_{\mc} W \in \mw_l[\mc]$.
 	\end{enumerate}
 	Then for every $V \in \mw$ and every $Y \in \mw_l[\mc]$, $V\times_{\mc} Y$ is a $\mw$-generated object. That is  $V\tm_{\mw_l[\mc]} Y\cong V\times_{\mc} Y$. 
 \end{lemma}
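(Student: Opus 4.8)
The plan is to exploit the colimit description of $\mw$-generated objects from Corollary \ref{x96} together with the fact that an exponentiable object induces a colimit-preserving product functor. First I would record that $V$ itself lies in $\mw_l[\mc]$: every object of $\mw$ is $\mw$-generated, being the colimit of the one-object diagram $F:\mathbf{1}\ra\mw$ picking it out, so Corollary \ref{x96} applies. Next, since $Y\in\mw_l[\mc]$ is $\mw$-generated, Corollary \ref{x96} furnishes a functor $F:\mk\ra\mw$ with $Y\cong\colim JF$, where $J:\mw\to\mc$ is the inclusion. Thus $Y$ is exhibited as a colimit in $\mc$ of objects lying in $\mw$.

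Now I would invoke hypothesis (1): because $V\in\mw$ is exponentiable in $\mc$, the functor $-\tm_{\mc}V:\mc\ra\mc$ admits a right adjoint and hence preserves every colimit that exists in $\mc$. Applying the (symmetric) product functor $V\tm_{\mc}-$ to the presentation of $Y$ yields
\begin{equation*}
V\tm_{\mc}Y\;\cong\;V\tm_{\mc}\bigl(\colim JF\bigr)\;\cong\;\colim\bigl(V\tm_{\mc}JF\bigr).
\end{equation*}
For each object $k$ of $\mk$ we have $JF(k)=F(k)\in\mw$, so $V\tm_{\mc}JF(k)=V\tm_{\mc}F(k)$ is a product of two objects of $\mw$; by hypothesis (2) it lies in $\mw_l[\mc]$, i.e. it is $\mw$-generated. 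Hence $V\tm_{\mc}Y$ is the colimit in $\mc$ of a diagram taking values in $\mw_l[\mc]$.

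It then remains to see that this colimit of $\mw$-generated objects is again $\mw$-generated, and this is precisely where Proposition \ref{x87} enters: the inclusion $U:\mw_l[\mc]\ra\mc$ creates all colimits that $\mc$ admits. Since $\mc$ is bicomplete the colimit $\colim(V\tm_{\mc}F)$ exists in $\mc$, so by creation it is computed in $\mw_l[\mc]$ and in particular belongs to it; therefore $V\tm_{\mc}Y$ is $\mw$-generated. For the displayed identity I would finally appeal to the remark in the Conventions: both $V$ and $Y$ lie in $\mw_l[\mc]$ and $V\tm_{\mc}Y$ is isomorphic to an object of $\mw_l[\mc]$, whence $V\tm_{\mw_l[\mc]}Y$ exists and $V\tm_{\mw_l[\mc]}Y\cong V\tm_{\mc}Y$.

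The one point genuinely requiring care is the passage from \emph{$V\tm_{\mc}-$ preserves colimits} to the conclusion that the resulting colimit is $\mw$-generated. One must check that the diagram $k\mapsto V\tm_{\mc}F(k)$ really lands in $\mw_l[\mc]$ (this is exactly hypothesis (2), applied termwise since each $F(k)$ is an object of $\mw$), and then close the argument using the closure of $\mw_l[\mc]$ under $\mc$-colimits from Proposition \ref{x87} rather than Corollary \ref{x96} directly, because the diagram now consists of $\mw$-generated objects rather than of objects of $\mw$ itself. Everything else is bookkeeping with the colimit isomorphisms.
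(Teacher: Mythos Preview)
The paper does not actually supply a proof of this lemma: it is imported verbatim from \cite[Lemma~9.2]{GM} and stated without proof in Section~\ref{s1}. So there is no in-paper argument to compare against.

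That said, your argument is correct and is the natural one. Writing $Y\cong\colim JF$ via Corollary~\ref{x96}, commuting $V\times_{\mc}-$ past the colimit using exponentiability of $V$, applying hypothesis~(2) termwise, and then closing up using Proposition~\ref{x87}(1) (creation of colimits by $U:\mw_l[\mc]\to\mc$) is exactly the intended mechanism; the final identification $V\times_{\mw_l[\mc]}Y\cong V\times_{\mc}Y$ via the remark in the Conventions is also the right move. The only cosmetic slip is that at one point you write $\colim(V\times_{\mc}F)$ where you mean $\colim(V\times_{\mc}JF)$; and you might make explicit that the diagram $k\mapsto V\times_{\mc}F(k)$ factors through $\mw_l[\mc]$ because $U$ is fully faithful, so that Proposition~\ref{x87}(1) applies on the nose. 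Neither affects the validity of the proof.
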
 
  \noindent Assume next that $\mw$ and $\mc$ are as in Lemma \ref{x86}.  
 \begin{itemize}
 	\item For $X,Y \in \mw_l[\mc]$, let
 	\begin{equation}\label{x108}
 	 J_X: \mw/X \to \mc
 	\end{equation}
 	be the functor defined by $J_X(\sigma:V \sra X)=V$, and let 
 	$J_X \tm_{\mc} Y$ be the composite functor
 	\begin{equation}\label{x88}
 		 \mw/X \st{J_X}\ra \mc\xra{-\tm_{\mc} Y} \mc
 	\end{equation}
 	By Proposition \ref{x87}, $\mw_l[\mc]$ is complete. For
 	$(V \st{\sigma} \ra X) \in \mw/X$, define

 		\begin{equation}\label{x116}
 		\theta_{\sigma}= \sigma \tm_{\mw_l[\mc]} 1_Y:V \tm_{\mc} Y = V \tm_{\mw_l[\mc]} Y\ra X \tm_{\mw_l[\mc]} Y.
 	\end{equation}

 	The maps $\theta_{\sigma}$ define a cone  
 	\begin{equation}\label{x89}  
 		J_X\tm_{\mc} Y\st{\theta}\Ra X \tm_{\mw_l[\mc]} Y 
 	\end{equation}
 	\item For $V\in \mw$, let 
 	\begin{equation}\label{x109 }
 	\Hom(V,-): \mc\to \mc	
 	\end{equation}
 	 be a right adjoint of the functor
 	$-\tm_{\mc} V: \mc\ra \mc$. For $Y,Z \in \mw_l[\mc]$, define $S^{Y}_{Z}$
 	to be the functor
 	 \begin{equation}\label{x90}
 		\begin{array}{rccl}
 			S^{Y}_{Z} :& (\mw/Y)^{op}& \ra &\mc \\
 			&(V \st{\sigma} \ra Y)&\longmapsto & \Hom(V,Z)
 		\end{array}
 	\end{equation}
 \end{itemize}
 
 \begin{definition}\label{x91} \cite[Definition 9.3.]{GM}\\
 	A left Kan extendable subcategory $\mw$ of a bicomplete category $\mc$ is said to be  \textbf{\textit{closeable}} if 
 	\begin{enumerate}

 		\item Every object in $\mw$ is exponentiable in $\mc$.
 		\item For all $V,W \in \mw$, the object $V\times_{\mc} W \in \mw_l[\mc]$.
 		\item For all $X,Y \in \mw_l[\mc]$, the cone  $J_X\tm_{\mc} Y\st{\theta}\Ra X \tm_{\mw_l[\mc]} Y$ given by (\ref{x89}) is a colimiting cone.
 		\item For all $Y,Z \in \mw_l[\mc]$, the functor  $S^{Y}_{Z}: (\mw/Y)^{op} \ra \mc$ given by (\ref{x90}) has a limit. 
 	\end{enumerate}
 \end{definition}
   \noindent Assume that $\mw$ is a closeable left Kan extendable subcategory of a bicomplete category $\mc$.
 Define 
 \begin{equation}\label{x92} 
 	\hom(-,-) : \mw_l[\mc]^{op}\times \mw_l[\mc] \ra  \mc
 \end{equation}
 by
 
 \begin{equation}\label{x117}
 \hom(Y,Z)= \lm S^{Y}_{Z}=\underset{(V\st{\sigma}{\rightarrow}Y)\in \mw|Y}\lm \Hom(V,Z).	
 \end{equation}
 Let $F_{L} :\mc \ra \mw_l[\mc]$ be the coreflector given by Proposition \ref{x85} and define 
 \begin{equation}\label{x93}
 	\begin{array}{rrll}
 		(-)^{(-)}:&\mw_l[\mc]^{op}\times \mw_l[\mc] &\ra &\mw_l[\mc]\\
 		&(Y,Z) &\mapsto &Z^Y
 	\end{array}
 \end{equation}
 to be the composite functor
 \begin{equation}\label{x94} 
 	\mw_l[\mc]^{op}\times \mw_l[\mc] \xra{\hom} \mc \st{F_{L}}\ra \mw_l[\mc],
 \end{equation} 
 so that  for all  $Y,Z \in \mw_l[\mc]$, $ Z^Y=F_{L}(\hom(Y,Z))$. 
 \begin{theorem}\label{x95} \cite[Theorem 9.6.]{GM}\\
 	Assume that $\mw$ is a closeable left Kan extendable subcategory of a bicomplete category $\mc$. Then $\mw_l[\mc]$ is cartesian closed with internal hom functor is as defined by (\ref{x94}).
 \end{theorem}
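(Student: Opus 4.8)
The plan is to reduce cartesian closure to the construction of a right adjoint for each functor $-\tm_{\mw_l[\mc]}Y$. Since $\mc$ is bicomplete, Proposition \ref{x87} shows that $\mw_l[\mc]$ is bicomplete as well, so it has a terminal object and all binary products; thus it suffices to fix $Y\in\mw_l[\mc]$ and exhibit the functor $(-)^{Y}=F_{L}(\hom(Y,-))$ of (\ref{x94}) as a right adjoint of $-\tm_{\mw_l[\mc]}Y$. Concretely, I would produce a bijection
\begin{equation*}
  \mw_l[\mc]\bigl(X\tm_{\mw_l[\mc]}Y,\;Z\bigr)\;\cong\;\mw_l[\mc]\bigl(X,\;Z^{Y}\bigr)
\end{equation*}
natural in $X,Z\in\mw_l[\mc]$, and conclude by the usual characterization of adjoint functors. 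The strategy is to show that both hom-sets are naturally isomorphic to iterated limits of the single bifunctor $\bigl((W\xra{\rho}X),(V\xra{\sigma}Y)\bigr)\mapsto\mc(W\tm_{\mc}V,Z)$ over $\mw/X$ and $\mw/Y$, which coincide by the interchange of limits.

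For the right-hand side, I would first use the coreflection $U\dashv F_{L}$ of Proposition \ref{x85} together with the full faithfulness of $U$ to identify $\mw_l[\mc](X,Z^{Y})$ with $\mc(X,\hom(Y,Z))$. Unfolding the definition (\ref{x117}) of $\hom(Y,Z)$ as the limit of $S^{Y}_{Z}$ over $\mw/Y$ and using that $\mc(X,-)$ preserves limits turns this into $\lm_{(V\xra{\sigma}Y)\in\mw/Y}\mc\bigl(X,\Hom(V,Z)\bigr)$; the exponential adjunction $-\tm_{\mc}V\dashv\Hom(V,-)$, available since $V\in\mw$ is exponentiable, rewrites each factor as $\mc(X\tm_{\mc}V,Z)$. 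Now Lemma \ref{x86} shows that $X\tm_{\mc}V$ is $\mw$-generated and equals $X\tm_{\mw_l[\mc]}V$, so condition (3) of Definition \ref{x91} presents it as the colimit of $W\tm_{\mc}V$ over $\mw/X$; applying $\mc(-,Z)$, which sends colimits to limits, yields $\lm_{(V\xra{\sigma}Y)\in\mw/Y}\lm_{(W\xra{\rho}X)\in\mw/X}\mc(W\tm_{\mc}V,Z)$.

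For the left-hand side I would argue symmetrically. Full faithfulness of $U$ identifies the hom-set with $\mc(X\tm_{\mw_l[\mc]}Y,Z)$, and condition (3) of Definition \ref{x91} presents $X\tm_{\mw_l[\mc]}Y$ as the colimit of $W\tm_{\mc}Y$ over $\mw/X$, turning the hom-set into $\lm_{(W\xra{\rho}X)\in\mw/X}\mc(W\tm_{\mc}Y,Z)$. For fixed $W\in\mw$, commutativity of the product together with Lemma \ref{x86} gives $W\tm_{\mc}Y\cong Y\tm_{\mw_l[\mc]}W$, and a second application of condition (3) expands this as the colimit of $V\tm_{\mc}W$ over $\mw/Y$; passing to $\mc(-,Z)$ again produces the iterated limit $\lm_{(W\xra{\rho}X)\in\mw/X}\lm_{(V\xra{\sigma}Y)\in\mw/Y}\mc(V\tm_{\mc}W,Z)$. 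Since $V\tm_{\mc}W\cong W\tm_{\mc}V$, both sides are now expressed as the two iterated limits of the same bifunctor over $\mw/X\tm\mw/Y$, which agree by the interchange of limits; this yields the desired bijection.

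Finally, I would verify naturality. Each identification above---the coreflection adjunction, the continuity of $\mc(X,-)$ and $\mc(-,Z)$, the exponential adjunctions, and the Lemma \ref{x86} identifications---is natural in $X$ and $Z$, so the composite is a natural isomorphism, which is exactly the required adjunction $-\tm_{\mw_l[\mc]}Y\dashv(-)^{Y}$. The main obstacle I expect is the bookkeeping around the two separate uses of condition (3): one must keep careful track of which product is formed in $\mc$ and which in $\mw_l[\mc]$, check via Lemma \ref{x86} that these coincide on the objects that actually occur, and ensure that the two colimiting cones of (\ref{x89}) are compatible with the interchange of limits, so that the resulting bijection is genuinely natural rather than merely a pointwise coincidence.
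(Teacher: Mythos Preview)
The paper does not give its own proof of this result; it is quoted from \cite[Theorem~9.6]{GM}, and Section~\ref{s1} is explicitly a summary with proofs deferred to that reference. Your argument is correct and is precisely the one the closeability axioms of Definition~\ref{x91} are designed to support: condition~(4) makes $\hom(Y,Z)$ exist, the coreflection $U\dashv F_L$ and the exponential adjunctions from condition~(1), together with the colimit presentations from condition~(3), unwind both hom-sets into iterated limits of $\mc(W\times_{\mc}V,Z)$ over $(\mw/X)\times(\mw/Y)$, and interchange of limits identifies them.

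One minor point of economy: in the two places where you invoke condition~(3) with one factor already in $\mw$---the pairs $(X,V)$ and $(Y,W)$---the conclusion actually follows from condition~(1) alone. Since $V\in\mw$ is exponentiable in $\mc$, the functor $-\times_{\mc}V$ preserves the colimit $X\cong\colim_{(W\to X)\in\mw/X}W$ (which holds because $X\in\mw_l[\mc]$), and Lemma~\ref{x86} then identifies the result with $X\times_{\mw_l[\mc]}V$. So the only essential use of condition~(3) is the initial decomposition of $X\times_{\mw_l[\mc]}Y$ when both factors are merely $\mw$-generated. This does not affect correctness, only the bookkeeping you flagged at the end.
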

 \section{Final functors}\label{s2}
This section introduces the notion of final functors and their properties. Further details and proofs of the stated results can be found in MacLane \cite[page 217]{ML},  Borceaux \cite[Section 2.11]{BF}, Perrone-Tholen \cite[Section 3.2]{PT} and  Riehl \cite[Section 8.3]{RE}.

Let $X: \mj \to \mc$ be a functor and $C\in \mc$. Recall that a cone    $\lambda: X \Ra C$,  is just a natural transformation from the functor $X$ to  the constant functor at $C$.\footnote{
In some references, a {\it cone} is defined as a natural transformation from a constant functor to an arbitrary functor, whereas a natural transformation from an arbitrary functor to a constant functor is called a {\it cocone}.}
It consists then of a class of maps $\lambda_j:X(j) \sra C$, $j\in \mj$   such that  for every map $\alpha: j\sra k$ in $\mj$, the following diagram commutes

 \begin{equation}\label{x110} 
 	 \begin{tikzcd}[ row sep=0.7 em  ]
 		X(j)\arrow[rd,"\lambda_j"]\arrow[dd,"X(\alpha)"']&\\
 		&C\\
 		X(k)\arrow[ru,"\lambda_k"']&
 	\end{tikzcd}
 \end{equation} 
The map $\lambda_j:X(j) \sra C$  is called the component of the cone $\lambda$ along the object $j \in \mj$. Such a cone can be either composed from the left by a map or from the right by a functor: 
 \begin{itemize}
\item If $h:C \sra C'$ is any map in $\mc$, then the composite of $h$ with $\lambda$ is the cone
$h\lambda:X\Ra C'$   defined by  $(h\lambda)_j=h\lambda_j$,  so that we have commutative digrams

\begin{equation}\label{x111} 
	\begin{array}{cc}
		\begin{tikzcd}
			X(j) \arrow[dr, "\lambda_j"' ] \arrow[rr, "(h\lambda)_j" ]{}
			& & C'  \\
			& C\arrow[ur, "h"' ]    
		\end{tikzcd}
		&\begin{tikzcd}
			X \arrow[rr, Rightarrow, "h\lambda" ] \arrow[dr, Rightarrow, "\lambda"' ] 
			& & C'  \\
			& C\arrow[ur, "h"' ]    
		\end{tikzcd}
	\end{array}
\end{equation}

\item
Let $F\colon \mi \to \mj$ be any functor. The composite of $\lambda$ with $F$ (also called the restriction of $\lambda$ along $F$)  is the  cone $\lambda F: XF\Ra C$ defined by

\begin{equation}\label{x113} 
(\lambda F)_i=\lambda_{F(i)}: X(F(i)) \ra C,	
\end{equation} 
so that one has a commutative diagram
\begin{equation}\label{x114} 
\begin{tikzcd}
	\mi \arrow[rr, Rightarrow, "\lambda F" ] \arrow[dr,  "F"' ] 
	& & X  \\
	& \mj\arrow[ur, Rightarrow,"\lambda"' ]    
\end{tikzcd}	
\end{equation} 
\end{itemize}
 Recall that the  cone $\lambda: X\Ra L$ is said to be colimiting if for each  cone 
 $\mu: X\Ra C$, there exists a unique morphism $h:L \sra C$ such that $\mu= h\lambda$. 
 Assume now that  $F: \ma \to \mb$ and $X: \mb \to \mc$ are two functors such that both $X$ and $XF$ have colimits. Let  $\lambda: X\Ra \colim X$ and $\mu: XF \Ra \colim XF$ be  colimiting cones.  Then there exists a unique map 
\begin{equation}\label{x74}
h: \colim XF \ra \colim X
\end{equation}
rendering the following diagram commutative
\begin{equation}\label{x115} 
\begin{tikzcd}
	XF \arrow[rr, Rightarrow, "\lambda F" ] \arrow[dr, Rightarrow, "\mu"' ] 
	& &\colim X  \\
	& \colim XF \arrow[ur, "h"' ]    
\end{tikzcd}	
\end{equation} 
Recall that a category $\mathcal{C}$ is said to be \emph{connected} if it is nonempty, and 
for every pair of objects $X$ and $X'$ in $\mathcal{C}$, there exists a zigzag of morphisms
connecting $X$ and $X'$. That is, a finite sequence of objects and morphisms of the form
\begin{equation}\label{x118} 
\begin{tikzcd}[column sep=0.4cm, row sep=0.4cm]
	X_0=X \arrow[dr]	&   & X_2   \arrow[rd]\arrow[ld] & \cdots & X_{n-2} \arrow[rd]\arrow[ld]& &X_n=X \arrow[ld]\\
	& X_1   &   & \cdots & & X_{n-1}&
\end{tikzcd}
\end{equation} 
The following result is well known.
\begin{proposition}\label{x69} 
		Let $F\colon \ma \to \mb$ be a functor. Then the following properties are equivalent. 
		\begin{enumerate}
			\item For every functor $X\colon \mb \to \mc$ admitting a colimit, the composite functor $XF$ has a colimit, and the canonical map 
			\begin{equation}\label{x119} 
				\colim XF \longrightarrow \colim X
			\end{equation} 
			given by (\ref{x74}) is an isomorphism.
			\item For every functor $X\colon \mb \to \mc$, $X$ has a colimit iff $XF$ has a colimit; moreover, when these colimits exist, the canonical map
			\begin{equation}\label{x121} 
				\colim XF \longrightarrow \colim X
			\end{equation}
			is an isomorphism.
			\item For every functor $X\colon \mb \to \mc$ admitting a colimit and every colimiting cone 
			\begin{equation}\label{x138} 
				\lambda:  X \Ra \colim X, 
			\end{equation} 
			the restriction
			\begin{equation}\label{x139} 
				\lambda F:  XF \Ra \colim X 
			\end{equation} 
			of $\lambda$ along $F$ is again colimiting.
			\item For every functor $X\colon \mb \to \mc$, a cone 
			\begin{equation}\label{x140} 
				\lambda:  X \Ra L, 
			\end{equation}
			is colimiting iff its restriction along $F$
			\begin{equation}\label{x141} 
				\lambda F:  XF \Ra L 
			\end{equation} 
			is  colimiting.
			\item For each $B \in \mb$, the composite functor
			\begin{equation}\label{x120} 
				\ma \xrightarrow{F} \mb \xrightarrow{\mb(B,-)} \Set
			\end{equation} 
			has colimit $1$.
			
			\item For every $B \in \mb$, the slice category $B/F$ is connected.
			\end{enumerate}
	\end{proposition}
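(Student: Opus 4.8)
The plan is to prove the six statements equivalent by running the cycle
\[
(1)\Rightarrow(5)\Rightarrow(6)\Rightarrow(4)\Rightarrow(3)\Rightarrow(1),
\]
and then folding in (2) through the two extra arrows $(2)\Rightarrow(1)$ and $(6)\Rightarrow(2)$. Three of these implications are essentially formal. The arrow $(4)\Rightarrow(3)$ holds because (3) is exactly the ``only if'' half of (4), and $(2)\Rightarrow(1)$ holds because (1) is the ``$X$ has a colimit $\Rightarrow$'' half of (2). For $(3)\Rightarrow(1)$ I would argue that, once (3) gives a colimiting cone $\lambda F:XF\Ra\colim X$, the cone $\lambda F$ and the defining colimiting cone $\mu:XF\Ra\colim XF$ are two colimiting cones of the \emph{same} diagram $XF$; the canonical map $h$ of (\ref{x74}), being the unique factorization $\lambda F=h\mu$, is therefore forced to be an isomorphism.

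For $(1)\Rightarrow(5)$ I would feed (1) the representable functor $X=\mb(B,-)\colon\mb\to\Set$. Its colimit is the terminal set $1$, since the category of elements of $\mb(B,-)$ is the slice $B/\mb$, which has the initial object $(B,1_B)$ and is hence connected. Condition (1) then yields $\colim\,\mb(B,F(-))=\colim XF\cong\colim X=1$, which is (5). For $(5)\Leftrightarrow(6)$ I would recall the standard description of a $\Set$-valued colimit as the set of connected components of its category of elements: for $G\colon\ma\to\Set$ one has $\colim G\cong\pi_0\!\left(\int G\right)$. The only thing to check is the identification $\int\mb(B,F(-))\cong B/F$, after which $\colim\,\mb(B,F(-))\cong\pi_0(B/F)$ is a one-point set precisely when $B/F$ is connected.

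The substance of the proposition lies in $(6)\Rightarrow(4)$, which I would route through the following lemma: if every slice $B/F$ is connected, then for each $X\colon\mb\to\mc$ and each $C\in\mc$ restriction along $F$ is a bijection
\[
\mathrm{Cone}(X,C)\xrightarrow{\ \cong\ }\mathrm{Cone}(XF,C),\qquad \lambda\longmapsto\lambda F,
\]
natural in $C$. To build the inverse, I would take a cone $\kappa\colon XF\Ra C$, and for each $B\in\mb$ choose an object $(A,f\colon B\to F(A))$ of $B/F$ and set $\lambda_B=\kappa_A\circ X(f)$. Here \emph{nonemptiness} of $B/F$ makes the choice possible, while \emph{connectedness}, combined with the naturality of $\kappa$, shows through a zigzag argument that $\lambda_B$ does not depend on the chosen object; a further such computation gives naturality of $\lambda$ in $B$, and evaluation at $(A,1_{F(A)})$ shows $\lambda F=\kappa$. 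Granting the lemma, (4) drops out: a cone $\lambda\colon X\Ra L$ is colimiting iff $h\mapsto h\lambda$ is a bijection $\mc(L,-)\to\mathrm{Cone}(X,-)$, and the restriction bijection carries this isomorphism onto the corresponding map for $\lambda F$, because $(h\lambda)F=h(\lambda F)$. The same lemma gives $(6)\Rightarrow(2)$ at the level of representability: $\mathrm{Cone}(X,-)$ is representable iff $\mathrm{Cone}(XF,-)$ is, which is exactly the statement that $X$ has a colimit iff $XF$ does, with a common representing object forcing the comparison map to be an isomorphism.

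I expect the well-definedness step inside the lemma---propagating the locally defined assignment $\lambda_B=\kappa_A\circ X(f)$ across a zigzag in $B/F$ using connectedness and the naturality of $\kappa$---to be the only place demanding genuine care; everything else is formal bookkeeping or a recalled fact about $\Set$-valued colimits.
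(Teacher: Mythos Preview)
Your proof is correct and complete. The cycle $(1)\Rightarrow(5)\Leftrightarrow(6)\Rightarrow(4)\Rightarrow(3)\Rightarrow(1)$ together with $(2)\Rightarrow(1)$ and $(6)\Rightarrow(2)$ closes properly, and the key lemma---that connectedness of every $B/F$ yields a natural bijection $\mathrm{Cone}(X,C)\cong\mathrm{Cone}(XF,C)$---is argued soundly, with nonemptiness giving existence of $\lambda_B$, connectedness (via the zigzag argument against naturality of $\kappa$) giving well-definedness, and the choice $(A,1_{F(A)})$ confirming $\lambda F=\kappa$.

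There is nothing to compare against: the paper does not prove this proposition. It introduces it with ``The following result is well known'' and defers to the references (Mac Lane \cite[p.~217]{ML}, Borceux \cite[Section 2.11]{BF}, Perrone--Tholen \cite[Section 3.2]{PT}, Riehl \cite[Section 8.3]{RE}) for the argument. Your write-up is in line with the standard treatments found there; in particular, the cone-bijection lemma you isolate is exactly the mechanism behind the classical proof that condition (6) characterizes finality.
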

	\begin{definition}\label{x70} 
	A functor $F\colon \ma \to \mb$  is said to be final if it satisfies the equivalent properties in Proposition \ref{x69}.
		\end{definition}

\begin{proposition}\label{x5}
	Every right adjoint functor is final.
\end{proposition}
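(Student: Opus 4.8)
The plan is to verify the characterization of finality given by condition (6) of Proposition \ref{x69}, namely that the comma category $B/F$ is connected for every object $B \in \mb$. Since a category possessing an initial object is automatically connected, it will suffice to exhibit an initial object in each $B/F$.

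Write $G \dashv F$ for the adjunction exhibiting $F$ as a right adjoint, where $G:\mb \to \ma$ is the left adjoint, and let $\eta: 1_{\mb} \Ra FG$ denote its unit. I claim that the pair $(GB,\ \eta_B: B \to FGB)$ is an initial object of $B/F$. Recall that an object of $B/F$ is a pair $(A,u)$ with $A \in \ma$ and $u: B \to FA$ a morphism of $\mb$, and that a morphism $(A,u) \to (A',u')$ is an arrow $g: A \to A'$ of $\ma$ satisfying $Fg \circ u = u'$.

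To prove initiality, fix an arbitrary object $(A,u) \in B/F$. The adjunction bijection $\mb(B,FA) \cong \ma(GB,A)$ sends $u$ to its transpose $\hat{u}: GB \to A$, the unique morphism of $\ma$ for which $F\hat{u} \circ \eta_B = u$. This identity says precisely that $\hat{u}$ is a morphism $(GB,\eta_B) \to (A,u)$ in $B/F$, and the uniqueness clause of the transpose bijection guarantees that it is the only such morphism. Hence $(GB,\eta_B)$ is initial, so $B/F$ is connected; as $B$ is arbitrary, $F$ satisfies condition (6) of Proposition \ref{x69} and is therefore final.

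The argument is essentially formal, so I expect no substantial obstacle. The only point requiring care is to match the defining triangle identity $F\hat{u}\circ \eta_B = u$ of the adjunct with the commutativity condition defining morphisms in the comma category $B/F$; it is exactly this matching that makes the universal property of the unit coincide with initiality in the slice, and one should double-check that the convention for $B/F$ used in Proposition \ref{x69} is the comma category $(B \downarrow F)$ with objects $(A, u\colon B \to FA)$, as assumed above.
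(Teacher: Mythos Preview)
Your proof is correct and follows essentially the same approach as the paper: both arguments show that the unit of the adjunction furnishes an initial object of the comma category $B/F$, whence connectedness and finality follow via condition (6) of Proposition \ref{x69}. The only difference is notational---the paper names the right adjoint $G$ rather than $F$---but the content is identical.
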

\begin{proof} A category having an initial object is connected.  
Let

\begin{equation}\label{x122} 
 F : \ma \rightleftarrows \mb:G  
\end{equation}
be an adjunction with unit $\eta: 1_{\ma}\Ra GF$ and let $A\in \ma$. The arrow-object $\eta_A: A\sra GF(A)$ of the slice category $A/G$ is initial. Therefore $A/G$ is connected. It follows that the functor $G$ is final. 
 
\end{proof}
\begin{definition}\label{x71} 
A category $\ma$  is said to be finally small if there exists a final functor $F:\mi \to \ma$, where $\mi$ is small.
	\end{definition}
	 The following result is an obvious consequence of the last two definitions.
\begin{proposition}\label{x72} 
Every functor from a finally small category to a cocomplete category has a colimit.
		
\end{proposition}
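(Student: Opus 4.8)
The plan is to unwind the two defining notions and then invoke the finality criterion recorded in Proposition \ref{x69}. Suppose $\ma$ is finally small, so that by Definition \ref{x71} there is a final functor $F:\mi \to \ma$ with $\mi$ small. Let $X:\ma \to \mc$ be an arbitrary functor into a cocomplete category $\mc$; the goal is to produce a colimit for $X$.

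The key observation is that the composite $XF:\mi \to \mc$ is a functor out of a \emph{small} category into a cocomplete category, and hence automatically has a colimit. From here the finality of $F$ does all the work: by property (2) of Proposition \ref{x69}, for every functor $X:\mb \to \mc$ the colimit of $X$ exists if and only if the colimit of $XF$ exists. Applying this to our $X$ and using that $\colim XF$ already exists, we conclude that $\colim X$ exists as well.

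There is no genuine obstacle here; the statement is an immediate consequence of the definitions, as the surrounding text already signals by calling it obvious. The only point worth emphasizing is the logical direction in which the finality criterion is used: Proposition \ref{x69}(2) is stated as a biconditional, so it transfers the existence of a colimit from the small diagram $XF$ \emph{upward} to the a priori large diagram $X$, which is precisely what is required to deduce the proposition. One could equally phrase the argument through property (1) of Proposition \ref{x69}, but the biconditional in (2) makes the existence transfer most transparent.
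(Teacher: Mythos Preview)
Your proof is correct and is exactly the obvious unwinding the paper intends when it calls the result an immediate consequence of Definitions~\ref{x70} and~\ref{x71}; the paper gives no further argument. One small correction to your closing aside: property~(1) of Proposition~\ref{x69} only transfers existence of colimits from $X$ down to $XF$, not upward, so it cannot be used in place of~(2) here---but this does not affect your main argument, which correctly invokes the biconditional in~(2).
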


 \begin{remark}\label{x73} 
Dually, a functor $F\colon \ma \to \mb$ is said to be cofinal if  the opposite functor $F^{op}\colon \ma^{op} \to \mb^{op}$ is final. The above properties for final functors have their duals for cofinal functors.
	
\end{remark}

\section{A useful example of final functors}\label{s3}
This section presents an example of a final functor arising in the theory of locales. This example will be used in the proof of one of the main results of the paper, where the established finality property will be essential. We begin with a brief summary of the necessary background on locales. For further reading and complete proofs of the underlying results, the reader is referred to the monographs by Picado and Pultr \cite{PP,PPS}, as well as to Chapter II of \cite{PPT} by Picado, Pultr, and Tozzi.

A \emph{locale}   is a complete lattice $L$ in which
\begin{equation}\label{x123} 
 a \wedge \bigvee B = \bigvee \{ a \wedge b \mid b \in B \}
	 \end{equation}
for all $a \in L$ and $B \subseteq L$.  Let $L$ and $M$ be two locales. A \emph{localic}  map from $L$ to $M$ is a meet-preserving function $f:L \ra M$ whose left adjoint preserves finite meets. The category of locales, denoted by $\Loc$, is the category whose objects are the locales and morphisms are the localic maps. By \cite[Chapter IV, Corollary 4.3.5]{PP} $\Loc$ is bicomplete.

A locale $L$ is a complete Heyting algebra with Heyting operator  given by
\begin{equation}\label{x124} 
a \Ra b = \bigvee \{ x \in L \mid a \wedge x \leq b \}.
\end{equation}
A subset \( S \) of  \( L \) is a \emph{sublocale} if it is a locale with the induced order. Equivalently, \( S \subseteq L \) is a sublocale if it satisfies the following two conditions:
\begin{equation}\label{x125} 
\bigwedge A \in S \quad \text{for all } A \subseteq S,
\end{equation}
\begin{equation}\label{x126} 
	x \Ra s \in S \quad \text{for all } x \in L,\ s \in S.
	\end{equation}

If $f: L\ra M$ is a localic map and $S$ is a sublocale of $L$ then the image of $S$ by $f$ which is 
\begin{equation}\label{x127} 
f(S)=\{f(s): s\in S\}
\end{equation}
is a sublocale of $L$. 
For $a\in L$, the sublocale
\begin{equation}\label{x128} 
  \mathfrak{c}(a)=\uparrow a=\{x\in L: a\leq x\}  
\end{equation}
is called the closed sublocale induced by $a$.

Let  $f: L\ra M$ be a localic map. Then 
\begin{itemize} 
\item $f$ is said to be closed if the image by $f$ of every  closed sublocale of $L$ is a closed sublocale of $M$. That is, 

\begin{equation}\label{x129} 
 f(\mathfrak{c}(a))=\mathfrak{c}(f(a)), \quad \forall a\in L. 
\end{equation}

\item The converse image of a closed sublocale of $M$ is a closed sublocale of $L$. More precisely, let $f^*: M \ra L$ be the left adjoint of $f$. Then
 \begin{equation}\label{x100} 
 	f^{-1}(\mathfrak{c}(b))= \mathfrak{c}(f^*(b)), \quad \forall b\in M.
 \end{equation}
\end{itemize}
A locale $L$ is said to be strongly Hausdorff if the diagonal map 
\begin{equation}\label{x130} 
\bigtriangleup : L \ra L\tm_{Loc}L 
\end{equation}
is closed.
The subcategory of $\Loc$ of strongly Hausdorff locales is denoted by $\HLoc$.

A subset $A$ of $L$ is a cover of $L$ if $\bigvee A=1$.  The locale 
$L$ is compact if every cover of it has a finite subcover.
A closed sublocale of a compact locale is compact. 
Let $\KHLoc$ denote the subcategory of $\HLoc$ of compact strongly Hausdorff locales.

 A compact subspace of a Hausdorff topological space is closed. The following result shows that this fact has a counterpart in the point-free setting.
\begin{theorem}\label{x40}\cite[Chapter III, Theorem 9.2.1]{PPS}\\
	A compact sublocale of a strongly Hausdorff locale is closed.
\end{theorem}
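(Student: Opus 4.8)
The plan is to exhibit the compact sublocale $S$ as the image of a \emph{closed} sublocale under a \emph{closed} localic map, and then to invoke the characterization (\ref{x129}) of closed maps to conclude that $S$ is itself closed. Throughout, write $c\colon S \hookrightarrow L$ for the sublocale embedding of the given compact $S$ into the strongly Hausdorff locale $L$, and let $\pi\colon S\times_{\Loc} L \to L$ denote the projection onto the second factor.

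First I would record the two inputs coming from the hypotheses. Since $L$ is strongly Hausdorff, the diagonal $\bigtriangleup\colon L \to L\times_{\Loc} L$ of (\ref{x130}) is closed, so its image $\bigtriangleup(L)$ is a closed sublocale of $L\times_{\Loc} L$. I then form the localic map $c\times_{\Loc} 1_L \colon S\times_{\Loc} L \to L\times_{\Loc} L$ and pull back $\bigtriangleup(L)$ along it. By the inverse-image formula (\ref{x100}), the preimage of a closed sublocale is closed, so
\[
G := (c\times_{\Loc} 1_L)^{-1}\big(\bigtriangleup(L)\big)
\]
is a closed sublocale of $S\times_{\Loc} L$. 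Because $\bigtriangleup(L)$ is the equalizer of the two projections $L\times_{\Loc} L \rightrightarrows L$, the sublocale $G$ is the equalizer of the two composites $S\times_{\Loc} L \rightrightarrows L$, i.e. the graph of $c$; in particular $\pi$ restricts to an isomorphism $G \cong S$ realizing exactly the embedding $c$.

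The crucial step is to project away the compact factor. I would use that $\pi\colon S\times_{\Loc} L \to L$ is a closed map because $S$ is compact — the localic form of the Kuratowski–Mrówka theorem, equivalently the pullback-stability of properness of the compact locale $S$ over the terminal locale. Granting this, the fact that $G$ is closed in $S\times_{\Loc} L$ together with closedness of $\pi$ forces $\pi(G)$ to be a closed sublocale of $L$ by (\ref{x129}); here $\pi(G)$ is a sublocale at all by (\ref{x127}). Finally, since $\pi|_G$ factors as $G\cong S \xrightarrow{c} L$, we get $\pi(G)=c(S)=S$, so $S$ is closed, as desired.

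The main obstacle is precisely the closedness of the projection $\pi$ in the middle step: this is where compactness of $S$ genuinely enters, and it is the one ingredient lying outside the material recalled in the excerpt, which supplies only the weaker fact that closed sublocales of compact locales are compact. A secondary, more bookkeeping difficulty is the sublocale arithmetic: one must check that $c\times_{\Loc} 1_L$ is a legitimate localic map, that $G$ is correctly identified with the graph of $c$ via the equalizer description, and that $\pi(G)$ computes to $S$ on the nose as sublocales of $L$ rather than merely up to isomorphism.
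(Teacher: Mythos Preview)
The paper does not supply its own proof of this statement: Theorem~\ref{x40} is stated with a citation to \cite[Chapter III, Theorem 9.2.1]{PPS} and is used as a black box thereafter, so there is no in-paper argument to compare yours against.

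That said, your outline is the standard argument and is essentially what one finds in the cited source: use the closed diagonal to produce the closed graph $G\subseteq S\times_{\Loc} L$, then project along the proper map $\pi$ using the localic Kuratowski--Mr\'owka theorem (compact locales are exactly those $S$ for which $S\times_{\Loc}(-)\to(-)$ is closed). Your own caveats are accurate: the only nontrivial input beyond what the paper recalls is indeed the Kuratowski--Mr\'owka characterization of compactness, and the bookkeeping about $G$ being the graph of $c$ and $\pi(G)=c(S)$ as sublocales goes through because pullback of the diagonal along $c\times 1_L$ computes the equalizer of $c\circ\mathrm{pr}_1$ and $\mathrm{pr}_2$, and images along embeddings recover the sublocale on the nose. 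There is no gap in your strategy.
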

Arbitrary intersection of sublocales of $L$ 
is again a sublocale of $L$. Therefore the set $\msl(L)$ of sublocales of  $L$, ordered by inclusion, is a complete lattice.  
The meet of a family of sublocales $(S_j)_{j\in J}$ in the coframe $\msl(L)$
is its intersection:
\begin{equation}\label{x26}
\bigwedge_{j \in J} S_{j} =    \bigcap_{j \in J} S_{j}.   
\end{equation} 
Its join is given by: 
\begin{equation}\label{x27}
\bigvee_{j \in J} S_{j} =  \{ \bigwedge A \mid A \subseteq \bigcup_{j \in J} S_{j}  \}.
\end{equation}
The lattice $\msl(L)$ is actually a coframe \cite[Chapter III, Theorem 3.2.1]{PP}.

Assume for the remainder of this section that $L$ is a strongly Hausdorff locale.
Let $\mk(L)$ be the subset of $\msl(L)$  of compact sublocales of $L$. 
 Then $\mk(L)$ is ordered by inclusion and is viewed as a small thin category in the ordinary way.

   Define $\HLoc/L$ to be the over category associated to the object $L$ of $\HLoc$, and let
	$\KHLoc/L$ be the  subcategory of $\HLoc/L$ whose objects are arrows $\sigma:K \sra L$ with compact strongly Hausdorff domain $K$.   By \cite[Corollary, page 91]{PP},
	  a sublocale of a strongly Hausdorff locale is strongly Hausdorff. Therefore there is a functor  
 
\begin{equation}\label{x6}
U_L:\mk(L) \to \KHLoc/L
 \end{equation}
which takes an object 
  $K$ of $\mk(L)$ to the inclusion map $i_K:K\sra L$, and a morphism $ K \ra K'$, which is an inclusion map, to the  commutative diagram:

  \begin{equation}\label{x131} 
  \begin{tikzcd}
  		K\arrow[dr, "i_K"' ] \arrow[rr, "\subseteq" ]{}
  		& & K'\arrow[dl, "i_{K'}" ]  \\
  		& L    
  	\end{tikzcd}
  \end{equation}
  which is a morphism in $\KHLoc/L$. The functor $U_L$ is fully faithful and injective on objects. The category $\mk(L)$ is then viewed as a   subcategory of $\KHLoc/L$.

\begin{lemma}\label{x3}
	The subcategory $\mk(L)$ of $\KHLoc/L$ is  reflective.
	\end{lemma}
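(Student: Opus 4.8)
The plan is to produce a left adjoint to the inclusion $U_L:\mk(L)\to\KHLoc/L$ by constructing, for each object $\sigma:K\sra L$ of $\KHLoc/L$, a universal arrow into $\mk(L)$. The natural candidate for the reflection is the image sublocale $\sigma(K)$ together with the corestriction $q:K\ra\sigma(K)$ of $\sigma$ onto its image: it satisfies $i_{\sigma(K)}\,q=\sigma$ and hence is a morphism $\sigma\to U_L(\sigma(K))$ in $\KHLoc/L$.

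First I would check that $\sigma(K)$ is genuinely an object of $\mk(L)$. By (\ref{x127}) the image $\sigma(K)$ is a sublocale of $L$, and by \cite[Corollary, page 91]{PP} it is strongly Hausdorff, being a sublocale of the strongly Hausdorff locale $L$. The substantive point, and the main obstacle, is the compactness of $\sigma(K)$, which I would establish as the point-free analogue of ``the continuous image of a compact space is compact.'' The corestriction $q:K\ra\sigma(K)$ is a surjection onto its image, so its left adjoint frame homomorphism $q^{*}:\sigma(K)\ra K$ is injective. Given a cover $A\subseteq\sigma(K)$, that is $\bigvee A=1$, the join- and top-preserving map $q^{*}$ gives $\bigvee\{q^{*}(a)\mid a\in A\}=q^{*}(1)=1$, a cover of the compact locale $K$; extracting a finite subcover $A_{0}\subseteq A$ yields $q^{*}(\bigvee A_{0})=1=q^{*}(1)$, and injectivity of $q^{*}$ forces $\bigvee A_{0}=1$. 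Thus every cover of $\sigma(K)$ admits a finite subcover, so $\sigma(K)$ is compact and lies in $\mk(L)$.

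It then remains to verify the universal property of $q$. Let $K'\in\mk(L)$ and let $t:\sigma\to U_L(K')$ be a morphism of $\KHLoc/L$, that is, a localic map $t:K\ra K'$ with $i_{K'}\,t=\sigma$. Then $\sigma(K)=i_{K'}\big(t(K)\big)=t(K)\subseteq K'$, so in the thin category $\mk(L)$ there is a necessarily unique morphism $\bar t:\sigma(K)\ra K'$, namely the inclusion $j:\sigma(K)\hookrightarrow K'$. Since $i_{K'}\,j=i_{\sigma(K)}$ and $i_{\sigma(K)}\,q=\sigma=i_{K'}\,t$, we obtain $i_{K'}(j\,q)=i_{K'}\,t$; as the sublocale inclusion $i_{K'}$ is a monomorphism in $\Loc$, this gives $j\,q=t$, i.e. $U_L(\bar t)\,q=t$. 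Uniqueness of $\bar t$ is automatic because $\mk(L)$ is thin. Hence $q$ is a universal arrow from $\sigma$ to $U_L$, these universal arrows assemble formally into a left adjoint of $U_L$, and $\mk(L)$ is reflective in $\KHLoc/L$. Everything beyond the compactness of the image is formal, resting only on the image factorization of localic maps and the monicity of sublocale inclusions.
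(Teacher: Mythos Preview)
Your proof is correct and follows the same route as the paper: the reflector sends $\sigma:K\to L$ to its image sublocale $\sigma(K)\in\mk(L)$. The paper defines this functor on objects and morphisms and simply asserts it is a reflection, whereas you supply the details the paper omits---namely the compactness of $\sigma(K)$ (via injectivity of $q^{*}$ for the corestriction $q$) and the verification of the universal property using monicity of the sublocale inclusion $i_{K'}$ and thinness of $\mk(L)$.
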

	\begin{proof} 
	Let
	
	\begin{equation}\label{x132} 
	R_L: \KHLoc/L  \to \mk(L) 
	\end{equation}
	be the functor defined as follows:
	 For any object $(\sigma: K\sra L) \in  \KHLoc/L$, let $\sigma(K)$ be the image of $K$ by $\sigma$ and
	 define $R_L(\sigma)= \sigma(K)$. Let $\sigma: K\sra L$ and $\sigma': K'\sra L$ be objects in $\KHLoc/L$ and $f$ a morphism from $\sigma$ to $\sigma'$. Then $f$ is a localic map $f:K\ra K'$ rendering the following diagram commutative
	\begin{equation}\label{x133} 
	\begin{tikzcd}
		K\arrow[dr, "\sigma"' ] \arrow[rr, "f" ]{}
		& & K'\arrow[dl, "\sigma'" ]  \\
		& L    
	\end{tikzcd}
	\end{equation}
	We have $\sigma=\sigma'f$, therefore $\sigma(K)=\sigma'f(K)\subseteq \sigma'(K')$. Define  $R_L(f)$ to be  the inclusion map

\begin{equation}\label{x134} 
	R_L(f): \sigma(K) \ra \sigma'(K').
\end{equation}
	The functor $R_L$ is a reflection of $\KHLoc/L$  on  $\mk(L)$.
	\end{proof}
 \begin{proposition}\label{x12}
	The  functor $U_L:\mk(L) \to \KHLoc/L$ given by (\ref{x6}) is final. In particular, the category $\KHLoc/L$ is finally small.
	\end{proposition}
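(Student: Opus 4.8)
The plan is to recognize $U_L$ as the inclusion of a reflective subcategory and thereby reduce finality to the already-established fact that right adjoints are final. Concretely, Lemma \ref{x3} asserts that $\mk(L)$ is reflective in $\KHLoc/L$ via the reflector $R_L$, which sends an object $(\sigma: K \sra L)$ to its image $\sigma(K)$. The first thing I would do is make explicit the adjunction underlying this reflection, namely $R_L \dashv U_L$. To verify the defining bijection, fix $(\sigma: K \sra L) \in \KHLoc/L$ and a compact sublocale $S \in \mk(L)$; because $\mk(L)$ is thin and ordered by inclusion, a morphism $R_L(\sigma) = \sigma(K) \ra S$ exists precisely when $\sigma(K) \subseteq S$, while a morphism $\sigma \ra U_L(S) = i_S$ in $\KHLoc/L$ is a localic map $K \ra S$ factoring $\sigma$ through the monic inclusion $i_S$, which likewise exists (and is then unique) exactly when $\sigma(K) \subseteq S$. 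The unit $\eta_\sigma: \sigma \ra U_L R_L(\sigma)$ is then the corestriction of $\sigma$ onto its image.

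Once the adjunction $R_L \dashv U_L$ is in hand, $U_L$ is a right adjoint, so by Proposition \ref{x5} it is final, which proves the first assertion. For the \enquote{in particular} clause, I would invoke Definition \ref{x71}: the category $\mk(L)$ is small, being (the thin category associated with) a subset of the complete lattice $\msl(L)$ of sublocales of $L$, and we have just exhibited a final functor $U_L: \mk(L) \ra \KHLoc/L$ out of it; hence $\KHLoc/L$ is finally small.

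The only point requiring a little care --- and the one I would single out as the main obstacle --- is confirming that the reflection of Lemma \ref{x3} really yields an adjunction with $U_L$ as the right adjoint, and in particular that $R_L(\sigma) = \sigma(K)$ is a legitimate object of $\mk(L)$. This rests on the fact that the localic image of a compact sublocale is again compact, so that $\sigma(K) \in \mk(L)$; since $K$ is compact and images of compact sublocales under localic maps are compact, this holds, and it is already implicit in the statement of Lemma \ref{x3}. Alternatively, if one preferred to avoid naming the adjunction, one could verify finality directly through criterion $(6)$ of Proposition \ref{x69}: for each $(\sigma: K \sra L)$ the objects of the slice $\sigma/U_L$ are the compact sublocales $S$ with $\sigma(K) \subseteq S$, and the choice $S = \sigma(K)$ is an initial object of that slice, so every such slice is connected.
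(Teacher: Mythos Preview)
Your proposal is correct and follows essentially the same approach as the paper: the paper's proof simply invokes Lemma \ref{x3} (reflectivity of $\mk(L)$ in $\KHLoc/L$) together with Proposition \ref{x5} to conclude that $U_L$, being a right adjoint, is final, and then notes that $\mk(L)$ is small. Your version is more explicit about the adjunction $R_L \dashv U_L$ and the smallness of $\mk(L)$, but the argument is the same.
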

	\begin{proof}
	 By Proposition \ref{x5}  and Lemma \ref{x3}, the functor $U_L$ is final. The category $\mk(L)$ is small, therefore  $\KHLoc/L$ is finally small. 
\end{proof}

\section{The density comonad $T$}\label{s4}
Let 
 	\begin{equation}\label{x142}
 	J:\KHLoc\to \HLoc
 \end{equation}
 be the inclusion functor. The aim of this section is to show that the functor $J$ has an idempotent density comonad.

  \begin{proposition}\label{x1}
	The  functor $J$ has a density comonad.
	\end{proposition}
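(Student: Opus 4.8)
The plan is to reduce the statement to the existence of a pointwise left Kan extension of $J$ along itself, since, as recalled at the start of Section \ref{s1}, the density comonad $(T,\epsilon,\delta)$ is produced automatically from such a Kan extension $(T,\eta)$ through the universal properties (\ref{x166}) and (\ref{x168}). Thus it suffices to show that the pointwise left Kan extension $T=\mathrm{Lan}_J J$ exists, and this holds precisely when all of its defining colimits exist. Evaluated at an object $M\in\HLoc$, the value is
\[
T(M)=\colim\bigl(J/M \longrightarrow \KHLoc \st{J}\ra \HLoc\bigr),
\]
the colimit being taken over the comma category $J/M$. Since $\KHLoc$ is a full subcategory of $\HLoc$, this comma category is exactly the slice $\KHLoc/M$, and the composite functor above is the functor $J_M:\KHLoc/M\to\HLoc$, $(\sigma:K\sra M)\mapsto K$, of the kind introduced in (\ref{x108}). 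Hence the whole problem comes down to showing that $\colim J_M$ exists in $\HLoc$ for every $M$.

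I would establish existence of each of these colimits by combining the finality result of Section \ref{s3} with the cocompleteness of the target. Every $M\in\HLoc$ is strongly Hausdorff, so Proposition \ref{x12} applies with $L=M$ and shows that the slice $\KHLoc/M$ is finally small. On the other hand $\HLoc$ is cocomplete: it sits inside the complete, well-powered, bicomplete category $\Loc$ and is closed there both under limits (products and equalizers of strongly Hausdorff locales remaining strongly Hausdorff, the latter because an equalizer is a sublocale of its domain) and under sublocales, so it is epireflective in $\Loc$ and therefore bicomplete. Proposition \ref{x72} then guarantees that every functor from a finally small category into a cocomplete category has a colimit; applying it to $J_M:\KHLoc/M\to\HLoc$ yields the required $T(M)=\colim J_M$.

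Since these colimits exist for every $M$, they assemble into the pointwise left Kan extension $(T,\eta)=\mathrm{Lan}_J J$, and feeding $(T,\eta)$ into the construction recalled at the beginning of Section \ref{s1} produces the density comonad of $J$. The genuinely substantive input is the existence of the colimits $\colim J_M$ inside $\HLoc$; everything after that is formal. Consequently I expect the main obstacle to be the cocompleteness step—specifically, making sure that the colimits one forms actually remain within $\HLoc$ rather than merely existing in the ambient $\Loc$—whereas the finality of $U_L$ from Section \ref{s3} is precisely the device that reduces each such colimit to a tractable, finally small diagram.
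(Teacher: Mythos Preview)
Your proposal is correct and follows essentially the same route as the paper: reduce to the existence of $\colim J_L$ for each $L\in\HLoc$, invoke Proposition~\ref{x12} to see that $\KHLoc/L$ is finally small, and then apply Proposition~\ref{x72} using cocompleteness of $\HLoc$. The only difference is in how cocompleteness of $\HLoc$ is justified: the paper simply cites \cite[Chapter~V, Corollary~6.4.3]{PP}, whereas you sketch an epireflectivity argument---your sketch reaches the right conclusion, but the inference ``closed under limits and sublocales $\Rightarrow$ epireflective'' needs an explicit adjoint functor theorem hypothesis (e.g.\ a generator or co-well-poweredness for $\Loc$), so citing the reflectivity of $\HLoc$ in $\Loc$ directly (as the paper later records in Remark~\ref{x84}.5) would be cleaner.
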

	\begin{proof}
	For $L\in \HLoc$, define
	$D_L:\KHLoc/L\to\KHLoc$ to be the functor which takes an arrow-object $\sigma:K\rightarrow L$ to its domain $K$. Let $J_L$ be the composite functor
	\begin{equation}\label{x8}
\KHLoc/L \st{D_L}\to\KHLoc \st{J}\to \HLoc.
\end{equation}
 By \cite[Chapter V, Corollary 6.4.3]{PP}, the category $\HLoc$ is cocomplete and by  Proposition \ref{x12}, $\KHLoc/L$ is finally small, thus by Proposition \ref{x72}, $J_L$
  has a colimit. It follows that the functor $J$ has a density comonad $(T,\epsilon,\delta)$.
\end{proof}
\begin{remark}\label{x18} 
Let $L$,  $J_L$ and $(T,\epsilon,\delta)$ be as in the previous proposition and $U_L:\mk(L) \to \KHLoc/L$ as in (\ref{x6}). Define $j_L$ to be the composite functor
\begin{equation}\label{x98}
j_L= J_L U_L:\mk(L) \ra \HLoc.
\end{equation}
Then by the proof of the last proposition, we have

\begin{equation}\label{x105}
	\colim j_L \cong T(L).  
\end{equation}
Let 
\begin{equation}\label{x99}
	 \alpha_L:j_L\Ra T(L)
\end{equation}
be a colimiting cone and let

\begin{equation}\label{x99}
\lambda_L:j_L\Ra L
\end{equation}
be the cone whose components are the inclusion maps.
Then clearly, by  the dual of (7) in \cite{GM} and by the finality of the functor $U_L$,  the counit $\epsilon_L:T(L) \sra L$ is the unique map rendering the following diagram commutative
\begin{equation}\label{x23}
\begin{tikzcd}
j_L \arrow[rr, Rightarrow, "\lambda_L"] \arrow[dr, Rightarrow,"\alpha_L"']
& &L \\
& T(L) \arrow[ur , rightarrow ,"\epsilon_L"']
\end{tikzcd}
\end{equation}

\end{remark}

\begin{lemma}\label{x97}
	Let $L$ be a strongly Hausdorff locale and $F:\mj \to \HLoc$ a functor. Assume that 
	\begin{enumerate}
		\item  $F(j)$ is a sublocale of $L$  for all $j\in \mj$.
		\item If $\alpha: j \sra k$ is a morphism in $\mj$, then $F(j)\subseteq F(k)$ and $F(\alpha):F(j)\sra F(k)$ is the inclusion map.
		\item $L$ is the colimit of the functor $F$ and the  cone $\lambda: F \Ra L$ whose components are the inclusion maps is  colimiting.
	\end{enumerate}
	Then in the coframe $\msl(L)$, we   have

	\begin{equation}\label{x135}
 L= \bigvee \{F(j), j\in \mj\}.   
	\end{equation}

\end{lemma}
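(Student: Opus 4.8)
The plan is to show that the join $S:=\bigvee\{F(j)\mid j\in\mj\}$, computed in the coframe $\msl(L)$, coincides with the top element $L$. By \cite[Corollary, page 91]{PP} the sublocale $S$ of the strongly Hausdorff locale $L$ is again strongly Hausdorff, so $S\in\HLoc$ and the sublocale embedding $i_S\colon S\sra L$ is a morphism of $\HLoc$. Since $S=L$ is equivalent to $L\subseteq S$ (the reverse inclusion holds by definition of a join in $\msl(L)$), the whole statement reduces to producing a single retraction of $i_S$.

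First I would assemble a cone over $F$ with apex $S$. For every $j\in\mj$ we have $F(j)\subseteq S$, so the embeddings $\mu_j\colon F(j)\sra S$ are localic maps; hypothesis (2) guarantees that each $F(\alpha)$ is itself an inclusion, and since a composite of inclusions is an inclusion, the family $(\mu_j)$ is natural and defines a cone $\mu\colon F\Ra S$ in $\HLoc$. Because $\lambda\colon F\Ra L$ is colimiting by hypothesis (3) and $S\in\HLoc$, the universal property yields a unique localic map $h\colon L\sra S$ with $h\lambda_j=\mu_j$ for all $j$.

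Next I would verify that $h$ retracts $i_S$, i.e. $i_Sh=1_L$. Indeed, for each $j$ the composite $i_S\mu_j$ is the embedding of $F(j)$ into $S$ followed by that of $S$ into $L$, which is exactly $\lambda_j$; hence $(i_Sh)\lambda_j=i_S\mu_j=\lambda_j=1_L\lambda_j$. As $i_Sh$ and $1_L$ are both endomorphisms of $L$ inducing the cone $\lambda$ from itself, the uniqueness clause in the universal property of the colimiting cone forces $i_Sh=1_L$. Finally, recalling that in the Picado--Pultr convention localic maps are genuine functions and that $i_S$ is the subset inclusion $s\mapsto s$, the identity $i_Sh=1_L$ says that $h(x)=x$ for every $x\in L$. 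Since $h(x)\in S$, this gives $x\in S$ for all $x\in L$, that is $L\subseteq S$, and therefore $S=L$, which is precisely (\ref{x135}).

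The step I expect to be the main obstacle is making the passage from the categorical retraction to the set-theoretic equality fully airtight: one must invoke that $S\in\HLoc$ so that the colimit universal property in $\HLoc$ may be applied with apex $S$, and then exploit the concrete description of a sublocale embedding as an inclusion function. Should one prefer to remain purely categorical at the end, an alternative is to observe that $i_S$ is a split epimorphism and, being a sublocale embedding, a regular monomorphism in $\Loc$; a morphism that is simultaneously a regular monomorphism and an epimorphism is an isomorphism, whence again $S=L$.
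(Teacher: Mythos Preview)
Your proof is correct and follows essentially the same strategy as the paper: factor the colimiting cone $\lambda$ through the join $S=\bigvee_j F(j)$ via the sublocale embedding $i_S\colon S\to L$, and then argue that $i_S$ is an isomorphism. The only difference is in how the last step is carried out: the paper invokes \cite[Proposition~11.29]{AHS} (colimiting cocones are extremal epi-sinks, so any mono through which they factor is an isomorphism), whereas you unfold that argument by hand, producing the retraction $h\colon L\to S$ from the universal property and concluding either set-theoretically or via ``split epi $+$ mono $\Rightarrow$ iso''; your version is more self-contained, while the paper's is terser but relies on the external reference.
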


\begin{proof}
	Define
	\begin{equation}\label{x136}
	\tilde{L}= \bigvee \{F(j), j\in \mj\}.  
	\end{equation}
	Then clearly, the colimiting cone $\lambda$ factors through the inclusion map $i:\tilde{L} \ra L $ as follows
	
	\begin{equation}\label{x137}
			\begin{tikzcd}
			F \arrow[rr, Rightarrow, "\lambda"] \arrow[dr, Rightarrow]
			& &L \\
			& \tilde{L} \arrow[ur , rightarrow ,"i"']
		\end{tikzcd}
	\end{equation}
	By \cite[Proposition 11.29]{AHS}, each colimit is an extremal epi-sink.
	The map $i$ is monic, therefore the inclusion map $i$ is an isomorphism. It follows that $\tilde{L}=L$.
\end{proof}

\begin{remark}\label{x10}

		Let $F:\mj \to \HLoc$ be a functor and $L$ be a strongly Hausdorff locale. Assume that $\lambda: F \Ra L$ is a colimiting  cone and  let $\mpp$ be the subset of $\msl(L)$ given by 
		$$\mpp=\{ \lambda_j(F(j)), j\in \mj\}.$$
		Then
		\begin{enumerate}
			\item  $\mpp$ is ordered by inclusion and may be viewed as a small thin category in the ordinary way.  
			
			\item Let $\phi: \mpp \sra \HLoc$ be the inclusion functor. Then the  cone $\phi \Ra L$ whose components are the inclusion maps is colimiting.
		\end{enumerate}
		 \end{remark}
Observe that by \cite[Corollary on page 93]{PP}, a sublocale of a strongly Hausdorff locale is again  strongly Hausdorff.
\begin{lemma}\label{x13}
	Let $L$ be a strongly Hausdorff locale, $\mpp \subseteq \mr \subseteq \msl(L)$ and,  
	\[
\begin{array}{ccc}
 \phi :\mpp \to \HLoc & \text{and} & \psi :\mr \to \HLoc                    
\end{array}
\]
the inclusion functors.
	Assume that 
	
	\begin{enumerate}
	\item If $R\in \mr$, then $R$ is a closed sublocale of $L$.
	\item If $P\in \mpp$ and $R \in \mr$,    then  $P\cap R\in \mpp$.
	\item The  cone $\alpha:\phi \Ra L$ whose components are the inclusion maps is colimiting. 
	\end{enumerate}
	Then the   cone $\beta:\psi \Ra L$ whose components are the inclusion maps is  colimiting.

	\end{lemma}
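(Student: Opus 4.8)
The plan is to verify the universal property of the colimit for $\beta$ directly, transferring it from the already-colimiting cone $\alpha$. Let $\mu:\psi\Ra C$ be an arbitrary cone in $\HLoc$, so that $\mu$ assigns to each $R\in\mr$ a localic map $\mu_R:R\ra C$ compatible with the inclusions of $\mr$. Since $\mpp\subseteq\mr$, the restriction $\mu|_{\mpp}:\phi\Ra C$ is a cone, and the colimiting property of $\alpha$ yields a unique localic map $h:L\ra C$ with $h\circ i_P=\mu_P$ for every $P\in\mpp$, where $i_P:P\hra L$ denotes the inclusion. Uniqueness of a factorization of $\mu$ through $\beta$ is immediate: any $h'$ with $h'\beta=\mu$ satisfies $h'\circ i_P=\mu_P$ for all $P\in\mpp$, whence $h'=h$ by the uniqueness coming from $\alpha$. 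It remains to prove existence, that is, $h\circ i_R=\mu_R$ for every $R\in\mr$.

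Fix $R\in\mr$ and, for each $P\in\mpp$, write $j_P:P\cap R\hra R$ for the inclusion; by hypothesis (2) we have $P\cap R\in\mpp$, and moreover $i_R\circ j_P$ is the inclusion $P\cap R\hra L$. Chasing the two cones shows that $h\circ i_R$ and $\mu_R$ agree on every $P\cap R$: on one hand $(h\circ i_R)\circ j_P=h\circ(i_R\circ j_P)=\mu_{P\cap R}$, because $P\cap R\in\mpp$ and $h$ factors $\mu|_{\mpp}$; on the other hand $\mu_R\circ j_P=\mu_{P\cap R}$ by the cone condition for $\mu$, since $P\cap R\subseteq R$ in $\mr$. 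Thus $(h\circ i_R)\circ j_P=\mu_R\circ j_P$ for all $P\in\mpp$, so it suffices to show that the family $\{j_P:P\cap R\hra R\}_{P\in\mpp}$ is jointly epic; then $h\circ i_R=\mu_R$, completing the proof.

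To establish joint epicness I will show that $\bigvee_{P\in\mpp}(P\cap R)=R$ in the coframe $\msl(L)$; a covering family of sublocales is jointly epic, since the associated nuclei meet to the identity and therefore jointly separate the frame of $R$. Here the hypotheses enter decisively. By hypothesis (3) together with Lemma \ref{x97} we have $\bigvee_{P\in\mpp}P=L$. By hypothesis (1), $R$ is a closed sublocale, hence complemented in the coframe $\msl(L)$ \cite{PP}; and meeting with a complemented element of a coframe preserves arbitrary joins, this being the dual (valid in any coframe) of the elementary fact that in a frame $u\vee-$ preserves arbitrary meets whenever $u$ is complemented. Therefore
\begin{equation*}
\bigvee_{P\in\mpp}(P\cap R)=\Big(\bigvee_{P\in\mpp}P\Big)\cap R=L\cap R=R,
\end{equation*}
as required.

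The routine parts are the two cone-chasing identities and the reduction to joint epicness. The crux of the argument is the distributivity step $\big(\bigvee_P P\big)\cap R=\bigvee_P(P\cap R)$, which is precisely where the closedness hypothesis (1) is needed: intersection with an arbitrary sublocale need not preserve joins in the coframe $\msl(L)$, and it is the complementedness of the closed sublocale $R$ that rescues the computation. Once this distributivity is in hand, the colimiting property of $\alpha$ transfers to $\beta$ with no further work.
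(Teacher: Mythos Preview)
Your proof is correct and follows the same overall strategy as the paper: restrict $\mu$ to $\mpp$, obtain $h$ from the colimiting cone $\alpha$, and then verify $h\circ i_R=\mu_R$ by showing the two maps agree on each piece $P\cap R$. The difference lies in how the closedness hypothesis is exploited in this last step. The paper argues elementwise: given $x\in R$, it invokes Lemma~\ref{x97} to write $x=\bigwedge_i x_i$ with each $x_i\in\bigcup_{P\in\mpp}P$, notes that closedness means $R=\mathfrak{c}(a)=\uparrow a$ is an up-set so that $x\le x_i$ forces $x_i\in R$ (hence $x_i\in P_i\cap R\in\mpp$), and then pushes the meet through both localic maps to obtain $h\beta_R(x)=\mu_R(x)$. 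You instead work one level up, in the coframe $\msl(L)$: closedness gives complementedness, complementedness yields the distributivity $R\cap\bigvee_P P=\bigvee_P(R\cap P)$, and the resulting cover of $R$ gives joint epicness of the inclusions $j_P$. Your route is more conceptual and makes the role of the closedness hypothesis structurally transparent; the paper's is more hands-on but avoids appealing to coframe-theoretic facts about complemented elements. One minor remark: your parenthetical justification of joint epicness (``the associated nuclei meet to the identity'') is a bit elliptic; the most direct argument is precisely the elementwise one the paper carries out, namely that by (\ref{x27}) every element of $R$ is a meet of elements of $\bigcup_P(P\cap R)$ and localic maps preserve arbitrary meets.
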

	\begin{proof}
	Let $\mu: \psi \Ra M$ be a  cone. We need to prove that there exists a unique morphism $h:L \sra M$ such that the following diagram commutes
\begin{equation}\label{x15}
\begin{tikzcd}
\psi  \arrow[dr, Rightarrow,"\beta"'] \arrow[rr, Rightarrow, "\mu"]
& &M \\
& L \arrow[ur  ,"h"']
\end{tikzcd}
\end{equation}
Let $\lambda: \phi \Ra M$ be the restriction of $\mu$ to $\mpp$.
\begin{itemize}	
\item {\bf Uniqueness}: A map $h$ that makes the diagram (\ref{x15}) commutative also makes the following diagram commutative 
\begin{equation}\label{x16}
\begin{tikzcd}
\phi \arrow[rr, Rightarrow, "\lambda"] \arrow[dr, Rightarrow,"\alpha"']
& &M \\
& L \arrow[ur  ,"h"']
\end{tikzcd}
\end{equation}
The  cone $\alpha$ is colimiting, therefore $h$ is unique.
\item {\bf Existence:} let $h:L\sra M$ be the unique map making the diagram (\ref{x16}) commutative. We need to prove that with this map $h$, the diagram (\ref{x15}) commutes. Let $R\in \mr$ and $x\in R$. By Lemma \ref{x97},  $L=  \underset{P \in \mpp }{\bigvee} P$. Therefore there exists a family $(x_i)_{i\in I}$ of elements of $\underset{P\in \mpp}{\bigcup}P$ such that $x= \underset{i\in I}{\bigwedge}x_i$. For $i\in I$, let $P_i\in \mpp$
be such that $x_i\in P_i.$ Observe that:
\begin{itemize}
\item By condition 1 of the lemma, 
$x_i\in R$,   $\forall i\in I$.
   \item By condition 2 of the lemma, 
	 $P_i\cap R \in \mpp$, $\forall i\in I$. 
	\end{itemize}
	Therefore
$$ 
\begin{array}{rcl}
h\beta_R(x)&=&h\beta_R(\underset{i\in I}{\bigwedge}x_i)\\
 &=&\underset{i\in I}{\bigwedge} h\beta_R(x_i)\\
&=&\underset{i\in I}{\bigwedge} h\beta_{P_i\cap R}(x_i)  \\
&=&\underset{i\in I}{\bigwedge} h\alpha_{P_i\cap R}(x_i)  \\
 &=&\underset{i\in I}{\bigwedge} \lambda_{P_i\cap R}(x_i) \\
&=&\underset{i\in I}{\bigwedge} \mu_{P_i\cap R}(x_i) \\
 &=&\underset{i\in I}{\bigwedge} \mu_{R}(x_i) \\
  &=& \mu_{R}(\underset{i\in I}{\bigwedge}x_i) \\
&=& \mu_{R}(x) 
\end{array} 
$$
Thus the following diagram commutes

\begin{equation}\label{x152}
	\begin{tikzcd}
		\psi(R) \arrow[rr,  "\mu_R"] \arrow[dr, "\beta_R"']
		& &M \\
		& L \arrow[ur  ,"h"']
	\end{tikzcd}
\end{equation}

It follows that the diagram (\ref{x15}) commutes.
\end{itemize}	
\end{proof}
\begin{lemma}\label{x17}
	Let $L$ be a strongly Hausdorff locale, $\mq \subseteq \mk(L)$ and   
	 $\phi :\mq \to \HLoc$ 
		 the inclusion functor.  
	 Assume that
	  \begin{enumerate}
	  \item $Q\cap K\in \mq$ for all $Q\in \mq$ and all $K\in \mk(L).$ 
	  \item The  cone 
	    $\alpha:\phi \Ra L$, whose components are the inclusion maps, is  colimiting.
	  \end{enumerate}
	  Let $T$ be the density comonad given by Proposition \ref{x1}.
	  Then $T(L)=L$ and the counit $\epsilon_L:L \sra L$ is the identity map.
	\end{lemma}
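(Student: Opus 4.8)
The plan is to deduce the result directly from Lemma \ref{x13}, after identifying the functor $j_L$ of Remark \ref{x18} with an inclusion functor. First I would observe that, by its construction in Remark \ref{x18}, the functor $j_L = J_L U_L : \mk(L) \to \HLoc$ sends a compact sublocale $K$ to $K$ itself and a morphism $K \subseteq K'$ to the corresponding inclusion map; in other words $j_L$ is nothing but the inclusion functor of $\mk(L) \subseteq \msl(L)$ into $\HLoc$. Consequently the cone $\lambda_L : j_L \Ra L$ of Remark \ref{x18}, whose components are the inclusion maps, is exactly the inclusion cone produced by $\mk(L) \to \HLoc$, and proving that $T(L) = L$ with $\epsilon_L = 1_L$ will amount to proving that $\lambda_L$ is colimiting.

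To establish that $\lambda_L$ is colimiting, I would invoke Lemma \ref{x13} with $\mpp = \mq$ and $\mr = \mk(L)$, noting that $\mq \subseteq \mk(L) \subseteq \msl(L)$ so that the hypothesis on the inclusions is met, and that then $\psi : \mk(L) \to \HLoc$ is precisely the inclusion functor $j_L$ and $\beta = \lambda_L$. The three hypotheses of Lemma \ref{x13} are checked as follows: (i) every $R \in \mk(L)$ is a compact sublocale of the strongly Hausdorff locale $L$, hence closed by Theorem \ref{x40}; (ii) for $P \in \mq$ and $R \in \mk(L)$, the meet $P \cap R$ in $\msl(L)$ is the intersection by (\ref{x26}) and lies in $\mq$, which is exactly hypothesis 1 of the present lemma; and (iii) the cone $\alpha : \phi \Ra L$ is colimiting, which is hypothesis 2. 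Lemma \ref{x13} then yields that $\lambda_L : j_L \Ra L$ is a colimiting cone.

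Finally, since $T(L)$ is by definition $\colim j_L$ (Remark \ref{x18}) and $\lambda_L$ is now known to be a colimiting cone for $j_L$ with vertex $L$, I may choose the colimit to be $L$ itself with colimiting cone $\lambda_L$. With this choice $T(L) = L$, and the counit $\epsilon_L$, which by the defining diagram (\ref{x23}) is the unique map satisfying $\epsilon_L \alpha_L = \lambda_L$ for the chosen colimiting cone $\alpha_L = \lambda_L$, is forced to be $1_L$ by uniqueness.

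I do not expect a serious obstacle: the substance of the argument is carried entirely by Lemma \ref{x13}, and the only points demanding care are the bookkeeping identification of $j_L$ with the inclusion functor and the observation that a colimit is determined only up to isomorphism, so that $L$ itself may legitimately be taken as $T(L)$ and the counit as the identity. The single place where the strong Hausdorff assumption on $L$ is genuinely used is the verification of hypothesis (i) through Theorem \ref{x40}.
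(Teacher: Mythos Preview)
Your proposal is correct and follows essentially the same argument as the paper: apply Lemma \ref{x13} with $\mpp=\mq$ and $\mr=\mk(L)$, using Theorem \ref{x40} for the closedness hypothesis, and then conclude via Remark \ref{x18}. Your version is more explicit about the identification of $j_L$ with the inclusion functor and about the choice of colimit, but the mathematical content is identical.
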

	  \begin{proof}
	   Let 
	   \begin{equation}\label{x153}
	   j_L: \mk(L) \to \HLoc 
	   \end{equation}
	   be as in (\ref{x98}). 
	    By Theorem \ref{x40}, a compact sublocale of a strongly Hausdorff locale is closed. Moreover, the cone  $\phi :\mq \to \HLoc$ is colimiting.
		Therefore by Lemma \ref{x13}, the  cone   
		\begin{equation}\label{x35}
		 j_L \Ra L 
		 \end{equation}
		  whose components are the inclusion maps is colimiting. 
		  By Remark \ref{x18}, $T(L)=L$ and the counit $\epsilon_L:L \sra L$ is the identity map.
		  \end{proof}

	\begin{proposition}\label{x7}
	The density comonad $T$ of the inclusion functor
	
	\begin{equation}\label{x155}
		J:\KHLoc\to \HLoc
	\end{equation}

	  is idempotent.
	\end{proposition}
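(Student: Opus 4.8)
The plan is to show that the comultiplication $\delta: T \Ra T^2$ is an isomorphism by proving that each component $\delta_L: T(L) \to T^2(L)$ is an isomorphism. Recall that for a density comonad, idempotence is equivalent to the statement that the counit $\epsilon_{T(L)}: T^2(L) \to T(L)$ is an isomorphism for every $L$, since $\epsilon_{T(L)}$ and $\delta_L$ are mutually inverse (this is the standard characterization: a comonad is idempotent iff $\epsilon T = T\epsilon$ is invertible). So it suffices to prove that for every strongly Hausdorff locale $L$, the object $T(L)$ satisfies $T(T(L)) \cong T(L)$ via the counit. The natural strategy is therefore to identify $T(L)$ concretely and then apply Lemma \ref{x17} to the locale $T(L)$.

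First I would recall from Remark \ref{x18} the explicit description $T(L) \cong \colim j_L = \bigvee\{K : K \in \mk(L)\}$ in the coframe $\msl(L)$, where by Lemma \ref{x97} the colimit is realized as the join of all compact sublocales of $L$. The key observation is that $T(L)$, being a join of compact sublocales, is itself a strongly Hausdorff locale (a sublocale of a strongly Hausdorff locale is strongly Hausdorff, as noted before Lemma \ref{x13}), and every compact sublocale $K \in \mk(L)$ is a compact sublocale of $T(L)$ as well. The aim is to apply Lemma \ref{x17} to the locale $M := T(L)$ with the family $\mq := \mk(L)$ regarded inside $\mk(M)$, showing that $T(M) = M$ and hence $T(T(L)) = T(L)$ with identity counit.

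To invoke Lemma \ref{x17} with $L$ replaced by $M = T(L)$ and $\mq = \mk(L)$, I must verify its two hypotheses. For hypothesis (2), the cone $\phi: \mq \Ra M$ whose components are inclusions must be colimiting; this should follow because $M$ is by construction the join $\bigvee\{K : K \in \mk(L)\}$, so via Lemma \ref{x97} (read in reverse through Remark \ref{x10}) the inclusion cone is colimiting. For hypothesis (1), I need $Q \cap K \in \mq$ for all $Q \in \mq = \mk(L)$ and all $K \in \mk(M)$. The main obstacle is precisely this intersection-closure condition: I must show that if $Q$ is a compact sublocale of the original $L$ and $K$ is \emph{any} compact sublocale of $M = T(L)$, then $Q \cap K$ is again a compact sublocale of $L$. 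The natural argument is that $Q$ is compact and, by Theorem \ref{x40}, closed in the strongly Hausdorff locale $M$; hence $Q \cap K$ is a closed sublocale of the compact locale $K$, and a closed sublocale of a compact locale is compact. Thus $Q \cap K$ is a compact (strongly Hausdorff) sublocale contained in $Q \subseteq L$, so $Q \cap K \in \mk(L) = \mq$.

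Once both hypotheses of Lemma \ref{x17} are checked for $M = T(L)$, the lemma yields $T(T(L)) = T(L)$ with $\epsilon_{T(L)}$ the identity, hence an isomorphism. Since this holds for every $L \in \HLoc$ and the counit is natural, $\epsilon T$ is a natural isomorphism, and therefore the comultiplication $\delta$ is an isomorphism. This establishes that $T$ is idempotent. The delicate point to get right is ensuring that the compact sublocales of $T(L)$ interact correctly with those of $L$ — specifically that $\mk(L)$ genuinely sits inside $\mk(T(L))$ and that the intersections needed for hypothesis (1) land back in $\mk(L)$ — which is exactly where the closedness of compact sublocales in strongly Hausdorff locales (Theorem \ref{x40}) and the stability of compactness under closed sublocales do the essential work.
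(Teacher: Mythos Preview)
Your overall strategy---apply Lemma \ref{x17} to $M = T(L)$ and conclude $\epsilon_{T(L)}$ is the identity---is exactly the paper's. The gap is in your identification ``$T(L) \cong \colim j_L = \bigvee\{K : K \in \mk(L)\}$ in the coframe $\msl(L)$.'' Remark \ref{x18} gives only $T(L) \cong \colim j_L$ in $\HLoc$; it does not exhibit $T(L)$ as a sublocale of $L$. Lemma \ref{x97} does not help here: its hypothesis (3) requires the inclusion cone $\lambda_L: j_L \Ra L$ to be colimiting, which would already force $T(L) = L$. So you cannot simply take $\mq = \mk(L)$ and regard it as sitting inside $\mk(T(L))$, because a priori there is no embedding $T(L) \hookrightarrow L$ through which the $K$'s factor as sublocales.

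The paper fixes exactly this point by setting $\mq = \{(\alpha_L)_K(K) : K \in \mk(L)\}$, the \emph{images} under the colimiting cone $\alpha_L: j_L \Ra T(L)$. These are genuinely sublocales of $T(L)$, and Remark \ref{x10} (not Lemma \ref{x97}) gives that the inclusion cone $\mq \Ra T(L)$ is colimiting, which is hypothesis (2) of Lemma \ref{x17}. For hypothesis (1) the paper invokes (\ref{x100}): given $Q = (\alpha_L)_{K'}(K')$ and $K \in \mk(T(L))$, the sublocale $K$ is closed in $T(L)$ by Theorem \ref{x40}, so its preimage $(\alpha_L)_{K'}^{-1}(K)$ is a closed (hence compact) sublocale $K'' \subseteq K' \subseteq L$, and $Q \cap K = (\alpha_L)_{K''}(K'') \in \mq$. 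Your closedness argument for $Q \cap K$ being compact is fine, but it only lands you in $\mk(T(L))$; the extra step showing it lies in $\mq$ (i.e.\ is an image from $\mk(L)$) is where (\ref{x100}) or, equivalently, the injectivity of $(\alpha_L)_{K'}$ (which follows from $\epsilon_L \circ (\alpha_L)_{K'}$ being an inclusion) is needed.
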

	\begin{proof}
	Let $L$ be a strongly Hausdorff locale, $j_L:\mk(L) \to \HLoc $ be as in  (\ref{x98}) and
	\begin{equation}\label{x38}
	\alpha_L: j_L\Ra T(L)
	\end{equation}
	    a colimiting cone.
	Define
	\begin{equation}\label{x60}
		 \mq=\{(\alpha_L)_{K}(K), K\in \mk(L)\}.
\end{equation}
	Then $\mq \subseteq \mk(T(L))$ and by Remark \ref{x10}, the cone $\mq \Ra T(L)$ whose components are the inclusion maps is colimiting. Moreover, by (\ref{x100}),
	$Q\cap K\in \mq$ for all $Q\in \mq$ and all $K\in \mk(T(L)).$
 Therefore by Lemma \ref{x17}, $T^2(L)=T(L)$  and 
 \begin{equation}\label{x154}
 	\epsilon_{T(L)}:T^2(L)=T(L) \sra T(L) 
 \end{equation}
 is the identity map. It follows that the comonad $T$ is idempotent.
	
	\end{proof}

\section{The category $\kHLoc$}\label{s5}

This section is devoted to defining compactly generated strongly Hausdorff categories and studying their properties.

 By Proposition~\ref{x7}, the subcategory $\KHLoc$ of $\HLoc$ is left Kan extendable. Define the category of compactly generated strongly Hausdorff locales $\kHLoc$ by
 
 \begin{equation}\label{x156}
 \kHLoc=(\KHLoc)_{l}[\HLoc]. 
 \end{equation}

By definition, $\kHLoc$ is the category of coalgebras over the density comonad $T$.  We next compare  this category with  of compactly generated strongly Hausdorff locales defined by Escardó  in   \cite{EM}.

\begin{remark} 
Let $L \in \HLoc$, $j_L:\mk(L) \ra \HLoc$, 
$\alpha_L:j_L\Ra T(L)$,
		$\lambda_L:j_L\Ra L$ and $\epsilon_L:T(L) \sra L$ be as in Remark \ref{x18}. Let $\psi: \HLoc \to \Loc$ be the inclusion functor and define 
 \begin{itemize}
 	\item $\tilde{j_L}=\psi j_L:\mk(L) \to \Loc$.	
 	\item $\tilde{\alpha}_L:\tilde{j_L} \Ra \colim \tilde{j_L}$ a colimiting cone.
 	\item $\tilde{\lambda}_L:\tilde{j_L} \Ra L$	the cone whose components are the inclusion maps.
 	\item $\tilde{\epsilon}_L: \colim \tilde{j_L} \ra L$ the unique map satisfying $\tilde{\epsilon}_L \tilde{\alpha}_L=  \tilde{\lambda}_L$.
 \end{itemize}	
Then 		
\begin{enumerate}
			\item By the dual of \cite[Proposition 2.3]{GM}, $L\in \kHLoc$   iff $\epsilon_L$ is an isomorphism.	
			\item $L$ is compactly generated in the sense of Escardó  
			 iff the map $\tilde{\epsilon}_L$ is an isomorphism (\cite[page 148]{EM}).
			\item The subcategory $\HLoc$ of $\Loc$ is reflective. Therefore, any strongly Hausdorff locale that is compactly generated in the sense of Escardó is in $\kHLoc$. In other words,  the category of compactly generated strongly Hausdorff locales defined by Escardó’s is a subcategory of $\kHLoc$.
			
		\item It seemed reasonable, though not clear, to Escardó that $\colim \tilde{j_L}$ is strongly Hausdorff for every strongly Hausdorff locale $L$ (\cite[Question 1.2, p.~151]{EM}). This is not clear to us either. If this claim holds, then $\colim   j_L \cong \colim \tilde{j_L}$.  
		In particular, $\epsilon_L$ is an isomorphism if and only if $\tilde{\epsilon}_L$ is. In this case, the category of compactly generated strongly Hausdorff locales  of Escardó is precisely $\kHLoc$.
	\end{enumerate}	
\end{remark}

The reader may compare the following result with (\cite[Proposition 5.7]{EM}).
\begin{proposition}\label{x2}
	The subcategory $\kHLoc$ of $\HLoc$ is coreflective.
	\end{proposition}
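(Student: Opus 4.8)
The plan is to obtain this as an immediate consequence of the general theory recalled in Section~\ref{s1}, specifically Proposition~\ref{x85}(1), rather than constructing the coreflection by hand. The key observation is that all the work needed to apply that proposition has already been carried out in Section~\ref{s4}, so the present statement should fall out as a formal corollary.

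First I would record that $\KHLoc$ is a left Kan-extendable subcategory of $\HLoc$, which amounts to checking the two defining conditions. Proposition~\ref{x1} establishes that the inclusion functor $J:\KHLoc\to\HLoc$ admits a pointwise left Kan extension along itself (the proof produces the relevant colimits via finality and cocompleteness of $\HLoc$), and hence that $J$ carries a density comonad $(T,\epsilon,\delta)$; Proposition~\ref{x7} then establishes that this density comonad is idempotent. Together these are exactly the hypotheses in the definition of a left Kan-extendable subcategory, and indeed this is already asserted in the opening sentence of Section~\ref{s5}.

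With this in hand, I would simply invoke Proposition~\ref{x85}(1) with $\mw=\KHLoc$ and $\mc=\HLoc$. Since $\kHLoc$ is by definition the category $(\KHLoc)_l[\HLoc]$ of $T$-coalgebras, that proposition asserts directly that $\kHLoc$ is coreflective in $\HLoc$, with the free $T$-coalgebra functor $F_T:\HLoc\to\kHLoc$ as coreflector and the counit $\epsilon$ of the comonad serving as the counit of the coreflection $(U\dashv F_T)$.

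There is essentially no obstacle to overcome at this stage: the substantive content—verifying that $T$ exists and is idempotent—was the subject of the preceding section, and the present proposition is purely formal. The only point requiring care is to confirm that the abstract hypotheses of Proposition~\ref{x85} genuinely match the situation at hand, i.e.\ that \emph{left Kan-extendable} has really been verified for the pair $(\KHLoc,\HLoc)$; but, as noted above, this verification is exactly what Propositions~\ref{x1} and~\ref{x7} supply.
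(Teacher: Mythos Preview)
Your proposal is correct and follows essentially the same approach as the paper: the paper's proof is the single sentence ``This is a consequence of Proposition~\ref{x85}.1,'' and you have simply unpacked why that proposition applies, citing Propositions~\ref{x1} and~\ref{x7} to verify left Kan-extendability.
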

	\begin{proof}
	This is a consequence of Proposition \ref{x85}.1.
	\end{proof}
	
Let $k: \HLoc \ra \kHLoc$ be  the coreflection.
	Then 
	\begin{equation}\label{x59}
		k(L) =T(L)\cong \colim J_L,  \; \; \;\forall L\in \HLoc. 
	\end{equation}

	\begin{proposition}\label{x25}
	A strongly Hausdorff locale $L$ is compactly generated iff there exists a functor $F:\mj\to\KHLoc$ such that $L\cong\colim JF$, where  $J:\KHLoc \to \HLoc$ is the inclusion functor.
	\end{proposition}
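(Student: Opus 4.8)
The plan is to recognize this statement as a direct instance of the abstract Corollary~\ref{x96}, with no genuinely new content to establish. The only substantive work has already been carried out earlier in the paper: Proposition~\ref{x7} shows that the density comonad $T$ of the inclusion $J:\KHLoc\to\HLoc$ is idempotent, so $\KHLoc$ is a left Kan extendable subcategory of $\HLoc$ in the sense of the definition in Section~\ref{s1}. Once this is in place, the present proposition is essentially a matter of unwinding the definitions.

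First I would recall that, by the definition of $\kHLoc$ in (\ref{x156}), a strongly Hausdorff locale $L$ being \emph{compactly generated} means precisely that $L\in\kHLoc=(\KHLoc)_{l}[\HLoc]$, i.e.\ that $L$ is a $\KHLoc$-generated object of $\HLoc$ in the terminology introduced after (\ref{x169}). This identification is purely a matter of naming: ``$\KHLoc$-generated'' is what ``$\mw$-generated'' specializes to when $\mw=\KHLoc$ and $\mc=\HLoc$.

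Next I would apply Corollary~\ref{x96} with the substitutions $\mc=\HLoc$ and $\mw=\KHLoc$, the hypothesis of which is satisfied by Proposition~\ref{x7}. The corollary states that an object $C\in\mc$ is $\mw$-generated if and only if there is a functor $F:\mk\to\mw$ with $C\cong\colim JF$ for $J:\mw\to\mc$ the inclusion. Reading this off in our setting (and renaming the index category $\mk$ to $\mj$) yields exactly the asserted equivalence: $L$ is $\KHLoc$-generated if and only if there exists a functor $F:\mj\to\KHLoc$ with $L\cong\colim JF$.

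There is no real obstacle here, since all the difficulty was absorbed into the proof of idempotence of $T$ (Proposition~\ref{x7}) and into the general categorical machinery of Corollary~\ref{x96} imported from \cite{GM}. The only point requiring care is the verification that the phrase ``compactly generated'' is indeed synonymous with ``$\KHLoc$-generated,'' which follows immediately from the defining equation (\ref{x156}); thus the proof reduces to citing Proposition~\ref{x7} to guarantee left Kan extendability and then invoking Corollary~\ref{x96}.
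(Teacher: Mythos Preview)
Your proposal is correct and matches the paper's own proof, which simply reads ``This is a consequence of Corollary~\ref{x96}.'' You have merely spelled out the instantiation $\mc=\HLoc$, $\mw=\KHLoc$ and noted that Proposition~\ref{x7} supplies the left Kan extendability hypothesis, which is exactly what the one-line citation is implicitly relying on.
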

	\begin{proof}
		This is a consequence of Corollary \ref{x96}.
	\end{proof}
Let $L$ be any locale.	
\begin{itemize}
	\item  The negation  (or pseudocomplement) of an element $a$ in  $L$ is defined as 
	\begin{equation}\label{x157}
		a^*  = \bigvee \{ x \in L \mid a \wedge x =0 \}.
	\end{equation}
	\item  The “\emph{rather-below}” relation, denoted by “$\prec$”, is defined on $L$ by
	
	\begin{equation}\label{x158}
			a\prec b  \Leftrightarrow a^*\vee b=1. 
	\end{equation}
	\item The locale $L$ is said to be regular if
	\begin{equation}\label{x159}
	  a=\bigvee \{x\in L, x\prec a\}, \quad \forall a\in L. 
	\end{equation}

\end{itemize}

Let $\Top$ be the category of topological spaces  and let
\begin{equation}\label{x78}
		Lc : \Top \rightleftarrows \Loc : Sp 
\end{equation}
	be the standard  adjunction (\cite[Theorem on page 58]{PPT}).
\begin{remarks}\label{x84} Recall that
	\begin{enumerate}

	\item A space $X$ is regular space iff the locale $Lc(X)$ is a regular locale.
	\item A space $K$ is compact space iff the locale $Lc(K)$ is a compact locale.

	\item A  regular locale is strongly Hausdorff (\cite[Chapter V, Proposition  5.4.2]{PP}).
	\item A strongly Hausdorff locale which is either compact or continuous is regular (\cite[Proposition 4.8, page 45]{MV}).
	\item The category $\HLoc$ is a reflective subcategory of  $\Loc$  \cite[Chapter V, Proposition  6.4.2]{PP}.

	\end{enumerate}
\end{remarks}
\begin{proposition}\label{x83}
	Let $K$ be a compact Hausdorff  space. Then $Lc(K)$ is a compact strongly Hausdorff locale.
\end{proposition}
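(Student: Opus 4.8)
The plan is to reduce the statement entirely to the dictionary between spaces and locales collected in Remarks~\ref{x84}, using as the sole genuinely topological input the classical fact that every compact Hausdorff space is regular. Once regularity of $K$ is in hand, the two assertions—compactness and strong Hausdorffness of $Lc(K)$—follow by separate applications of the remarks, so the argument splits cleanly into a compactness track and a separation track that never interact.

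Concretely, I would proceed as follows. First, I would recall that a compact Hausdorff space is normal, hence regular; this is standard point-set topology and I would simply cite it rather than reprove it. With $K$ regular, Remark~\ref{x84}.1 gives that $Lc(K)$ is a regular locale, and then Remark~\ref{x84}.3 (a regular locale is strongly Hausdorff) yields that $Lc(K)\in\HLoc$. This disposes of the separation axiom. For the size condition, I would invoke Remark~\ref{x84}.2 directly: since $K$ is compact, $Lc(K)$ is a compact locale. Combining the two conclusions, $Lc(K)$ is a compact strongly Hausdorff locale, which is exactly the claim.

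I do not expect a serious obstacle, since the proposition is essentially a transport of three elementary facts across the $Lc\dashv Sp$ adjunction. The only point that requires any care is making sure the implication \emph{compact Hausdorff $\Rightarrow$ regular} is the right hypothesis to feed into Remark~\ref{x84}.1, whose statement is phrased in terms of regular spaces rather than Hausdorff ones; this is where the normality of compact Hausdorff spaces does the work. It is also worth remarking that Remark~\ref{x84}.4 is \emph{not} needed here: that remark runs in the opposite direction (from strong Hausdorffness plus compactness to regularity of the locale), whereas the present argument obtains strong Hausdorffness from regularity, so the logical flow is regular space $\rightsquigarrow$ regular locale $\rightsquigarrow$ strongly Hausdorff locale, with compactness carried along independently.
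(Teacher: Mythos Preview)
Your proposal is correct and matches the paper's own proof essentially step for step: the paper also invokes the classical fact that compact Hausdorff implies normal (citing Munkres), hence regular, then applies Remarks~\ref{x84}.1 and~\ref{x84}.2 to get that $Lc(K)$ is a compact regular locale, and finally Remark~\ref{x84}.3 to conclude strong Hausdorffness. Your observation that Remark~\ref{x84}.4 is not needed is also accurate.
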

\begin{proof}
	The space $K$ is compact Hausdorff.   By \cite[Theorem 32.3]{MJ}, $K$ is normal Hausdorff and therefore a regular  space. By Remarks \ref{x84}.1 and \ref{x84}.2, $Lc(K)$   is  a compact regular locale and by Remark \ref{x84}.3, $Lc(K)$ is a compact strongly Hausdorff locale.	
\end{proof}
Our next result relies on the Hofmann–Lawson duality (\cite[Chapter VII, Theorem 6.4.3]{PP}), which we now introduce.	
	\begin{theorem}\label{x101} 
		The adjunction $(Lc,Sp)$ given by (\ref{x78}) induces an equivalence between the category continuous locales and the category of sober locally compact spaces.
	\end{theorem}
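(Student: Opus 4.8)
The plan is to deduce the statement from the general principle that \emph{any} adjunction restricts to an equivalence between its two subcategories of fixed points. Write $Lc \dashv Sp$ for the adjunction (\ref{x78}), with unit $\eta_X : X \ra Sp\,Lc(X)$ and counit $\varepsilon_L : Lc\,Sp(L) \ra L$. The triangle identities show, with no extra hypothesis, that $Sp$ carries $\{L : \varepsilon_L \text{ iso}\}$ into $\{X : \eta_X \text{ iso}\}$ and $Lc$ carries the latter into the former, and that on these subcategories $\eta$ and $\varepsilon$ are natural isomorphisms. Now $\eta_X$ is an isomorphism exactly when $X$ is \emph{sober}, and $\varepsilon_L$ is an isomorphism exactly when $L$ is \emph{spatial} (has enough points). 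Hence $(Lc, Sp)$ already restricts to an equivalence between spatial locales and sober spaces, and the whole theorem reduces to showing that, under this equivalence, the continuous locales correspond precisely to the locally compact sober spaces.

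For that correspondence I would isolate two inputs. The first, which is formal once the dictionary between spaces and their open-set frames is in place, is the Hofmann--Lawson characterization of local compactness: a sober space $X$ is locally compact (equivalently core-compact) if and only if the frame $Lc(X)$ is a continuous lattice, i.e.\ every open is the directed join of the opens way-below it. The second, which is the substantive point, is that \emph{every continuous locale is spatial}. Granting these, the correspondence is immediate: if $L$ is continuous it is spatial, and since $Sp(L)$ is sober with $Lc\,Sp(L) \cong L$ continuous, $Sp(L)$ is locally compact; conversely, if $X$ is locally compact sober then $Lc(X)$ is continuous and, being $Lc$ of a sober space, spatial, so it is a continuous locale. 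The restrictions $Sp|$ and $Lc|$ thus land in the two named subcategories and remain mutually inverse, which gives the desired equivalence.

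The hard part, and the step I expect to carry the real weight, is spatiality of a continuous locale $L$: given $a \not\le b$ in $L$, I must produce a point separating them, i.e.\ a prime element $p \ge b$ with $a \not\le p$. Continuity is exactly what makes this possible. Since $a = \bigvee\{x : x \ll a\}$ and $a \not\le b$, pick $c \ll a$ with $c \not\le b$. The interpolation property of the way-below relation on a continuous lattice --- for $c \ll a$ there is $d$ with $c \ll d \ll a$ --- lets one apply Zorn's lemma to the poset of elements $\ge b$ that do \emph{not} dominate $c$; a maximal such element is prime, and it is precisely the separating point required. This is the heart of Hofmann--Lawson duality, and the interpolation-plus-Zorn argument is where all the work lies; everything else in the proof is bookkeeping inherited from the ambient adjunction, so the triangle identities supply naturality and no separate verification of the equivalence data is needed.
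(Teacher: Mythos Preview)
The paper does not prove this theorem at all: it is stated with a citation to \cite[Chapter~VII, Theorem~6.4.3]{PP} and used as a black box. So there is no ``paper's own proof'' to compare against; what you have written is an outline of the standard proof of Hofmann--Lawson duality as it appears in the literature, and structurally it is the right outline: restrict the adjunction $Lc \dashv Sp$ to its fixed points (sober spaces on one side, spatial locales on the other), then identify which fixed points correspond under the equivalence by showing (i) a sober space is locally compact iff its open-set frame is continuous, and (ii) every continuous locale is spatial.

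One technical point in your sketch of (ii) deserves care. You propose to apply Zorn's lemma directly to the poset $\{\,x \ge b : c \not\le x\,\}$, where $c \ll a$ and $c \not\le b$. But this poset need not be closed under suprema of chains: from $c \ll a$ alone you cannot conclude that $c \not\le \bigvee_i x_i$ for a chain $(x_i)$ in that set, since the $x_i$ are unrelated to $a$. Interpolation is needed \emph{before} the Zorn step, not merely alongside it: one first uses repeated interpolation $c = c_0 \ll c_1 \ll c_2 \ll \cdots \ll a$ to build a Scott-open filter $F = \bigcup_n \mathord{\uparrow} c_n$ with $a \in F$ and $b \notin F$, and then applies Zorn to $\{\,x \ge b : x \notin F\,\}$, which \emph{is} closed under directed joins because the complement of a Scott-open set is Scott-closed. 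The maximal element so obtained is then prime by the distributivity argument you indicate. With this correction your outline is the standard one and is correct; the paper simply takes the result for granted.
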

	
	
	Let $L$ be a locale. Recall that
	\begin{itemize}
		\item The “\emph{well-below}” relation “\emph{$ \ll $ }”  is defined on $L$ by 
		\begin{equation}\label{x160}
			x \ll y \quad \text{iff} \quad 
			\Big(\forall D \subseteq L \text{ directed},\;
			y \leq \bigvee D \implies \exists d \in D,\; x \leq d\Big).
		\end{equation}
		\item The locale $L$ is said to be \emph{continuous} if
		\begin{equation}\label{x161}
				\forall a\in L, \quad a= \bigvee\{x\in L: x\ll a\}.
		\end{equation}

	\end{itemize}
	
	\begin{theorem}\label{x77}
	Every  strongly Hausdorff continuous locale $L$ is compactly generated.
	\end{theorem}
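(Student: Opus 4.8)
The plan is to combine the Hofmann--Lawson duality with the criterion of Proposition \ref{x25}: a strongly Hausdorff locale is compactly generated as soon as it can be written as $\colim JF$ for some functor $F$ into $\KHLoc$. Thus it suffices to realize $L$ as a colimit in $\HLoc$ of a diagram of compact strongly Hausdorff locales, and the whole argument is devoted to producing such a diagram from the points of $L$.

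First I would apply Theorem \ref{x101}. Since $L$ is continuous, the duality identifies $L$ with $Lc(X)$, where $X=Sp(L)$ is a sober, locally compact space. Next I would upgrade $X$ to a \emph{Hausdorff} space: by Remark \ref{x84}.4 the strongly Hausdorff continuous locale $L$ is regular, hence by Remark \ref{x84}.1 the space $X$ is regular; being sober it is $T_0$, and a regular $T_0$ space is Hausdorff. So $X$ is a locally compact Hausdorff space. The topological heart of the argument is then that such a space is a $k$-space, and therefore equals the colimit in $\Top$ of the directed diagram of its compact subspaces: writing $\mj$ for the (small) poset of compact subspaces of $X$ ordered by inclusion and $\iota:\mj\to\Top$ for the inclusion diagram, one has $X\cong\colim \iota$.

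Applying the left adjoint $Lc$ of the adjunction (\ref{x78}), which preserves colimits, gives $L\cong Lc(X)\cong\colim (Lc\,\iota)$ in $\Loc$. Each compact subspace $K$ is compact Hausdorff, so by Proposition \ref{x83} the locale $Lc(K)$ lies in $\KHLoc$, and applying $Lc$ to the inclusions makes $Lc\,\iota$ factor as $JF$ for a functor $F:\mj\to\KHLoc$. To finish I would transport this colimit from $\Loc$ to $\HLoc$: since every $Lc(K)$ and $L$ itself already lie in the reflective subcategory $\HLoc$ of $\Loc$ (Remark \ref{x84}.5), and the $\Loc$-colimit of the diagram is the object $L\in\HLoc$, applying the reflector fixes it, so that $L\cong\colim JF$ holds in $\HLoc$ as well. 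Proposition \ref{x25} then yields that $L$ is compactly generated.

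The step I expect to require the most care is this transfer of the colimit from $\Loc$ to $\HLoc$: one must verify that a $\Loc$-colimit which happens to land in the reflective subcategory is computed there by the reflector, and hence remains the colimit after corestriction to $\HLoc$. The other point to pin down precisely is the passage from the strong Hausdorffness of $L$ to the Hausdorffness of $X$, since the remaining ingredients---the fact that a locally compact Hausdorff space is the colimit of its compact subspaces and the colimit-preservation of $Lc$---are routine once $X$ is known to be locally compact Hausdorff.
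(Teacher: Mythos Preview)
Your proposal is correct and follows essentially the same route as the paper: both use Hofmann--Lawson duality to pass to a sober locally compact space $X$, deduce that $X$ is Hausdorff via Remarks~\ref{x84}.4 and~\ref{x84}.1, write $X$ as the colimit of its compact subspaces, push this colimit through the left adjoint $Lc$, observe via Proposition~\ref{x83} that the resulting diagram lands in $\KHLoc$, and then invoke Proposition~\ref{x25}. The only cosmetic difference is that the paper composes with the reflector $\psi:\Loc\to\HLoc$ explicitly (writing $L\cong\psi(L)\cong\colim \psi\,Lc\,\phi$ since both $\psi$ and $Lc$ preserve colimits), whereas you argue that the $\Loc$-colimit already lies in $\HLoc$ and is therefore fixed by the reflector---these are two phrasings of the same transfer step.
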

	\begin{proof}
	
	 Let $L$ be a  strongly Hausdorff continuous locale and 
	let $X$ be a sober locally compact space corresponding to $L$ under the 
	Hofmann-Lawson Duality.
	We have 
	\begin{equation}\label{x79}
		Lc(X)\cong  L.
	\end{equation}
By Remark \ref{x84}.4, $L$ is a regular locale. Therefore by Remark \ref{x84}.1, $X$ is a regular   space. Moreover, $X$ is a sober   space and therefore $X$ is $T_0$. A $T_0$ regular space is Hausdorff, thus $X$ is Hausdorff. Define $\mk(X)$ to be the category whose objects are the compact subspaces of $X$ and whose morphisms are the inclusion maps. Let 
\begin{equation}\label{x162}
	\phi:  \mk(X) \to \Top
\end{equation}
 be the functor which takes a compact subspace $K$ of $X$ to $K$ itself. Then clearly,
	
	\begin{equation}\label{x81}
	\colim \phi \cong X.
	\end{equation}
	By Remark \ref{x84}.5, $\HLoc$ is a reflective subcategory of  $\Loc$. 
	Let 
	\begin{equation}\label{x80}
		\psi: \Loc \to \HLoc 
	\end{equation} 
	be a reflector.
	We have
	\begin{equation}\label{x82}
	 \begin{array}{lllr}
	 	L&\cong&\psi (L)& \text{ because } $L$ \text{ is strongly Hausdorff}\\
	 	&\cong&\psi Lc(X)& \text{by (\ref{x79})}\\
	    &\cong&\psi Lc(\colim \phi)&\text{by (\ref{x81})}\\
	 	&\cong& \colim\psi Lc \phi& \text{ because } \psi \text{ and } Lc \text{ preserve colimits} 
	 \end{array}  
	\end{equation} 
	Let $K\in \mk(X)$. The space $K$ is compact Hausdorff. By Proposition \ref{x83}, $Lc(K)$ is a compact strongly Hausdorff locale. Therefore,
 $\psi Lc(K) \cong Lc(K) \in \KHLoc.$ Thus  
$\psi Lc \phi(K)= \psi Lc(K) \cong Lc(K) \in \KHLoc$. It follows that the functor $\psi Lc \phi$ factors through the inclusion functor $J:\KHLoc\to \HLoc$. By (\ref{x82}), $L\cong \colim \psi Lc \phi$, therefore by Proposition \ref{x25}, $L\in \kHLoc$.
\end{proof}
\section{The cartesian closed structure on $\kHLoc$}\label{s6} 
In this section,  we use Theorem \ref{x95} to prove that the category $\kHLoc$ is cartesian closed.
	
We begin by recalling the following theorem, due to Hyland \cite[Theorem~1]{HM}.
		
\begin{theorem}\label{x31} The exponentiable objects in the category  $\Loc$ are precisely the continuous   locales.
\end{theorem}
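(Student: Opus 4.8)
The statement is classical and deep---it is Hyland's theorem---so the plan is to prove the two implications separately, working throughout in the dual category $\Loc^{\op}$ of frames (same objects, frame homomorphisms as arrows), under which a binary product $Y\times_{\Loc}E$ corresponds to a coproduct of frames, which I write $Y\oplus E$. Exponentiability of $E$ means that $-\times_{\Loc}E:\Loc\ra\Loc$ has a right adjoint, i.e.\ is itself a left adjoint, and dually that the frame coproduct functor $-\oplus E:\Loc^{\op}\ra\Loc^{\op}$ is a right adjoint. The universal test object is the Sierpiński locale $S$, whose underlying lattice is the three-element chain $\{0<u<1\}$; since this is the free frame on one generator, $S$ represents the underlying-lattice functor, so that $\Loc(Z,S)\cong Z$ naturally in every locale $Z$.

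For the direction \emph{continuous implies exponentiable}, I would first build the Sierpiński exponential $S^E$ explicitly, taking its underlying frame to be the lattice $\sigma(E)$ of Scott-open subsets of $E$. The decisive input is that when $E$ is continuous the relation $\ll$ interpolates (if $x\ll a$ then $x\ll y\ll a$ for some $y$), and this is exactly what is needed to verify the exponential bijection $\Loc(Y\times_{\Loc}E,S)\cong\Loc(Y,S^E)$. I would then pass from the Sierpiński case to an arbitrary exponential $Y^E$ by the standard machinery: the Scott-topology analysis together with the interpolation property shows that $-\times_{\Loc}E$ preserves colimits, and a solution-set condition then produces the right adjoint $Y\mapsto Y^E$, whose value at $S$ is the locale just constructed.

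The direction \emph{exponentiable implies continuous} is the one I expect to be the main obstacle. If $E$ is exponentiable then $S^E$ exists, and combining the exponential adjunction with the Sierpiński representability gives, naturally in the frame $Y$,
\[
\Loc(Y,S^E)\;\cong\;\Loc(Y\times_{\Loc}E,\,S)\;\cong\;Y\times_{\Loc}E\;\cong\;Y\oplus E,
\]
so that the frame coproduct functor $-\oplus E$ is corepresented by the frame of $S^E$ and therefore preserves all limits of frames (equivalently, $-\times_{\Loc}E$ preserves all colimits of locales). The crux is to convert this abstract preservation property into the order-theoretic identity $a=\bigvee\{x:x\ll a\}$: I would test the functor $-\oplus E$ against codirected diagrams of frames manufactured from $E$ and its directed joins, and unwind the resulting isomorphism on the generators $V\oplus U$ so as to force the way-below relation to approximate every element from below, i.e.\ to force $E$ to be continuous. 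The genuinely delicate part throughout is precisely this passage between the abstract adjunction and the concrete way-below relation, for which the Scott-topology description of $S^E$ and the interpolation property carry the whole argument.
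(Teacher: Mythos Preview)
The paper does not prove this theorem at all: it is stated as a recalled result, attributed to Hyland \cite[Theorem~1]{HM}, and used as a black box. So there is no ``paper's own proof'' to compare your proposal against.

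That said, your outline is a faithful summary of how the classical proof actually proceeds (see Hyland's original paper or Johnstone's account). The forward direction is essentially as you describe: one builds $S^E$ from the Scott topology on $E$, and the interpolation property of $\ll$ is exactly what makes the exponential law hold; general exponentials then follow by a presentation/colimit argument. For the converse, your idea of exploiting that $-\oplus E$ must preserve limits of frames is the right starting point, but the passage you flag as ``the genuinely delicate part'' is indeed where all the work lies, and your sketch stops short of an argument: one has to produce a specific diagram (in the classical proof, a directed system of free frames presenting the down-set lattice of $E$) whose preservation forces each $a\in E$ to be the join of elements way below it. As written, your proposal for this direction is a plan rather than a proof.

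In short: your proposal goes well beyond what the paper does, since the paper simply cites the result; your sketch of the easy direction is correct, and your sketch of the hard direction identifies the right mechanism but does not yet carry it out.
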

The following result is due to Johnstone \cite[Proposition 3.2 page 104]{J}.	
\begin{proposition}\label{x48} If L is a continuous locale and M is a strongly Hausdorff locale, then the exponential object $\Hom(L,M)$ is strongly  Hausdorff. \footnote{We are assuming classical logic, so that  every locale is open, see Johnstone \cite[ page 97]{J}.}
\end{proposition}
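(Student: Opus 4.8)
The plan is to reduce the claim to a single structural fact: for exponentiable $L$, equalizers of pairs of localic maps into $\Hom(L,M)$ are always closed sublocales. Since $L$ is continuous, Theorem \ref{x31} makes it exponentiable, so $\Hom(L,-):\Loc\ra\Loc$ exists as a right adjoint to $-\tm_{\Loc}L$. I would then prove the stronger assertion that for any strongly Hausdorff $M$ every equalizer of a pair of maps into $\Hom(L,M)$ is closed; applied to the two projections $\Hom(L,M)\tm_{\Loc}\Hom(L,M)\rightrightarrows\Hom(L,M)$, this exhibits the diagonal of $\Hom(L,M)$ as a closed sublocale and hence proves strong Hausdorffness. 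The reduction rests on the elementary equivalence that a locale $N$ is strongly Hausdorff iff every equalizer $\mathrm{Eq}(f,g)$ of a pair $f,g:X\rightrightarrows N$ is closed: one direction is immediate, since $\mathrm{Eq}(f,g)=(f,g)^{-1}(\bigtriangleup_N)$ is a preimage of the diagonal and closed sublocales are stable under preimage by (\ref{x100}); the converse is witnessed by the two projections.

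So I would fix $f,g:X\rightrightarrows\Hom(L,M)$ with equalizer $E\ra X$. Transposing across the adjunction gives $\bar f,\bar g:X\tm_{\Loc}L\rightrightarrows M$, and by naturality of transposition a map $\phi:Y\ra X$ equalizes $f,g$ iff $\phi\tm 1_L$ equalizes $\bar f,\bar g$; equivalently, $\phi$ factors through $E$ iff $\phi\tm 1_L$ factors through $D:=\mathrm{Eq}(\bar f,\bar g)$. Because $M$ is strongly Hausdorff, $D$ is a closed sublocale of $X\tm_{\Loc}L$, say $D=\mathfrak{c}(w)$, and I would use the standard criterion that a localic map factors through $\mathfrak{c}(w)$ exactly when the pullback of $w$ along it equals $0$.

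The heart of the argument is to convert the condition $(\phi\tm 1_L)^{*}(w)=0$ into a closed condition on $\phi$ alone, and here I would invoke the hypothesis that every locale is open. Then $L\ra 1$ is an open map, and since open maps are stable under pullback, the projections $\pi:X\tm_{\Loc}L\ra X$ and $\pi':Y\tm_{\Loc}L\ra Y$ are open; write $\exists_{\pi}\dashv\pi^{*}$ and set $\beta:=\exists_{\pi}(w)$. For any $\phi$, the square with $\phi\tm 1_L$ over $\phi$ is a pullback, so the Beck--Chevalley condition for open maps yields $\phi^{*}\beta=\exists_{\pi'}(\phi\tm 1_L)^{*}(w)$, and because the adjunction forces $\exists_{\pi'}(V)=0\iff V=0$, we obtain $\phi^{*}(\beta)=0\iff(\phi\tm 1_L)^{*}(w)=0$. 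Chaining the equivalences, $\phi$ factors through $E$ iff $\phi^{*}(\beta)=0$ iff $\phi$ factors through $\mathfrak{c}(\beta)\subseteq X$; as a sublocale is determined by the maps that factor through it, $E=\mathfrak{c}(\beta)$ is closed, completing the argument.

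The main obstacle is exactly this last step: the whole proof depends on ``quantifying $L$ away'', i.e.\ on $\pi$ being open with a left adjoint satisfying Beck--Chevalley and reflecting $0$, which is precisely what the standing assumption that every locale is open provides (classically valid, as the footnote records). Note that continuity of $L$ itself enters only through Theorem \ref{x31}, to guarantee that the exponential exists at all; the Hausdorff-preservation is then driven by openness. I would take particular care over the compatibility of transposition with the factorization criterion for closed sublocales, since that is the one place where a routine bookkeeping error is easy to make.
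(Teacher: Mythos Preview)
Your argument is correct. Note, however, that the paper does not actually prove this proposition: it simply records it as Johnstone's result \cite[Proposition~3.2]{J} and moves on. What you have written is, in effect, a reconstruction of Johnstone's original proof, which likewise hinges on the classical fact that every locale is open so that the projection $X\tm_{\Loc}L\to X$ is open, and then uses the Beck--Chevalley identity for the resulting left adjoint $\exists_\pi$ to transport the closed equalizer $\mathrm{Eq}(\bar f,\bar g)\subseteq X\tm_{\Loc}L$ down to a closed sublocale of $X$. Your reduction via the equalizer characterisation of strong Hausdorffness, the factorisation criterion $h^*(w)=0$ for maps into $\mathfrak{c}(w)$, and the observation that $\exists_{\pi'}$ reflects $0$ because $\pi'^{*}(0)=0$, are all sound; the only point worth flagging is that the equivalence ``$N$ is strongly Hausdorff iff all equalizers into $N$ are closed'' deserves a one-line justification in the forward direction (pull back the diagonal along $(f,g)$ and invoke (\ref{x100})), which you already sketch. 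So there is no gap, and nothing to compare against beyond the bare citation.
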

	
\begin{corollary}\label{x30}A strongly Hausdorff compact locale is exponentiable in the category $\HLoc$.
	\end{corollary}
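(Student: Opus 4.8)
The plan is to deduce exponentiability in $\HLoc$ from exponentiability in the ambient category $\Loc$, transferring the relevant adjunction across the reflection $\HLoc \hookrightarrow \Loc$. The two ingredients that make this possible are Hyland's characterization of exponentiable locales (Theorem \ref{x31}) together with Johnstone's preservation result (Proposition \ref{x48}), and the good behaviour of finite products under reflection.

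First I would show that a compact strongly Hausdorff locale $K$ is continuous. By Remark \ref{x84}.4, $K$ is regular, so it remains to check that a compact regular locale is continuous. The key point is that in a compact locale the rather-below relation refines the well-below relation: if $a \prec b$, that is $a^* \vee b = 1$, and $b \leq \bigvee D$ with $D$ directed, then $a^* \vee \bigvee D = 1$, so compactness together with directedness yields a single $d \in D$ with $a^* \vee d = 1$, whence $a = a \wedge (a^* \vee d) = (a\wedge a^*)\vee(a\wedge d) = a \wedge d \leq d$; thus $a \ll b$. Combining this with regularity $a = \bigvee\{x : x \prec a\}$ gives $a = \bigvee\{x : x \ll a\}$, so $K$ is continuous. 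By Hyland's Theorem \ref{x31}, $K$ is then exponentiable in $\Loc$; write $\Hom(K,-)\colon \Loc \to \Loc$ for the right adjoint of $- \times_{\Loc} K$.

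Next I would transfer this adjunction to $\HLoc$. Since $\HLoc$ is reflective in $\Loc$ (Remark \ref{x84}.5), it is closed under limits formed in $\Loc$; in particular, for $M \in \HLoc$ the product $M \times_{\Loc} K$ is again strongly Hausdorff and therefore coincides with $M \times_{\HLoc} K$, so $- \times_{\HLoc} K$ is the restriction of $- \times_{\Loc} K$. Moreover, by Proposition \ref{x48} applied with the continuous locale $K$, the object $\Hom(K,N)$ is strongly Hausdorff whenever $N$ is; hence $\Hom(K,-)$ restricts to a functor $\HLoc \to \HLoc$. Using fullness of $\HLoc$ at both ends, the $\Loc$-adjunction gives natural bijections
\begin{equation*}
\HLoc(M \times_{\HLoc} K,\, N) = \Loc(M \times_{\Loc} K,\, N) \cong \Loc(M,\, \Hom(K,N)) = \HLoc(M,\, \Hom(K,N)),
\end{equation*}
for $M, N \in \HLoc$, which exhibit the restricted functor $\Hom(K,-)$ as a right adjoint of $- \times_{\HLoc} K$. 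Since $\HLoc$ has all products (being reflective in the bicomplete $\Loc$), this proves that $K$ is exponentiable in $\HLoc$.

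The routine verifications are the naturality of the displayed bijections and the identification of the two products. The only steps carrying genuine content are the passage from compact regular to continuous, where compactness is used exactly as above, and the invocation of Johnstone's Proposition \ref{x48} to guarantee that the internal hom remains inside $\HLoc$. I expect the latter—ensuring that the right adjoint does not leave the subcategory—to be the crux, since it is precisely what allows the $\Loc$-adjunction to descend, and it is here that the strong Hausdorffness of $K$, mediated through its continuity, is essential.
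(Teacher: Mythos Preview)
Your proof is correct and follows essentially the same route as the paper: strongly Hausdorff compact $\Rightarrow$ regular (Remark \ref{x84}.4) $\Rightarrow$ continuous $\Rightarrow$ exponentiable in $\Loc$ (Theorem \ref{x31}), then Proposition \ref{x48} keeps the internal hom inside $\HLoc$. The only differences are expository: where the paper cites \cite[Chapter VII, Proposition 5.2.2]{PP} for ``compact regular $\Rightarrow$ continuous'' you supply the direct argument that $\prec$ refines $\ll$ under compactness, and you spell out the descent of the adjunction to the reflective subcategory which the paper leaves implicit in the phrase ``the desired result follows from Proposition \ref{x48}''.
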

	\begin{proof}
	By Remark \ref{x84}.4, any strongly Hausdorff compact locale is regular.
	According to \cite[Chapter VII, Proposition 5.2.2]{PP}, every regular compact locale is continuous.
	Hence, the desired result follows from Proposition \ref{x48}.
	\end{proof}	 
Let $L$ and $M$ be two compactly generated strongly Hausdorff locales. Define
\begin{equation}\label{x33}
 S^{L}_{M}:(\KHLoc/L)^{op} \to \HLoc
\end{equation}
to be the functor given by 	
$S^{L}_{M}(\sigma:K \sra L)=\Hom(K,M)$, where $\Hom(K,M)$ is  the exponential object given by Proposition \ref{x48}.

\begin{lemma}\label{x32} 
The functor $S^{L}_{M}$ given by (\ref{x33}) has a limit.
	\end{lemma}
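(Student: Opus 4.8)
The plan is to compute the limit of $S^L_M$ not over the large index category $(\KHLoc/L)^{op}$ but over a small one, exploiting the finality established in Proposition \ref{x12} together with the completeness of $\HLoc$. Concretely, I would reduce the existence of $\lim S^L_M$ to the existence of the limit of a diagram indexed by the small category $\mk(L)^{op}$.

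First I would record that the functor is well defined as a diagram in $\HLoc$: for each object $(\sigma:K\sra L)$ of $\KHLoc/L$ the domain $K$ is a compact strongly Hausdorff locale, hence continuous by Corollary \ref{x30}, so the exponential $\Hom(K,M)$ exists and is strongly Hausdorff by Proposition \ref{x48}. Thus $S^L_M$ genuinely lands in $\HLoc$. Next comes the main step. By Proposition \ref{x12} the functor $U_L:\mk(L)\to\KHLoc/L$ is final, so by Remark \ref{x73} the opposite functor
\[
U_L^{op}:\mk(L)^{op}\longrightarrow(\KHLoc/L)^{op}
\]
is cofinal, since its opposite $(U_L^{op})^{op}=U_L$ is final. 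I would then invoke the limit (dual) version of the characterization in Proposition \ref{x69}: for a cofinal functor $G$ and any functor $Y$ into an arbitrary category, $Y$ admits a limit if and only if $YG$ does, and in that case the canonical comparison is an isomorphism. Applying this with $G=U_L^{op}$ and $Y=S^L_M$ reduces the problem to showing that the restricted diagram
\[
S^L_M\,U_L^{op}:\mk(L)^{op}\longrightarrow\HLoc
\]
has a limit.

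Finally, I would observe that $\mk(L)$ is a small thin category, so $\mk(L)^{op}$ is small, and that $\HLoc$ is complete: it is a reflective subcategory of the bicomplete category $\Loc$ by Remark \ref{x84}.5, and a reflective subcategory inherits all limits existing in the ambient category. Hence the small diagram $S^L_M\,U_L^{op}$ has a limit in $\HLoc$, and by the cofinality of $U_L^{op}$ this limit is simultaneously a limit of $S^L_M$. This is precisely the dual of Proposition \ref{x72}, applied to the cofinally small category $(\KHLoc/L)^{op}$.

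The only point requiring care — and the place where a reader might stumble — is the bookkeeping of opposite categories: one must keep straight that $S^L_M$ is contravariant on $\KHLoc/L$, so its limit is a limit over $(\KHLoc/L)^{op}$, and that passing from the finality of $U_L$ (a colimit statement about $\KHLoc/L$) to a limit statement requires dualizing to the cofinality of $U_L^{op}$. Once the op-conventions are fixed, the argument is a formal combination of Proposition \ref{x12}, Remark \ref{x73}, the dual of Proposition \ref{x69}, and the completeness of $\HLoc$; no explicit construction of the limit is needed for the existence claim, although the argument simultaneously identifies $\lim S^L_M$ with the limit of the small diagram $S^L_M\,U_L^{op}$ over $\mk(L)^{op}$.
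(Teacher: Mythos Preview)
Your argument is correct and is essentially the same as the paper's own proof, which simply cites Propositions~\ref{x72} and~\ref{x12} together with the duality between limits and colimits. You have merely unpacked these references in detail, including the verification that $\HLoc$ is complete via Remark~\ref{x84}.5, which the paper leaves implicit.
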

	\begin{proof}
This is a consequence of Propositions \ref{x72}, \ref{x12}  and the duality between limits and colimits.	 
	\end{proof}	
	 
\begin{proposition}\label{x102} 
The subcategory $\KHLoc$ is reflective in $\HLoc$.	
\end{proposition}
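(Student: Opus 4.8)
The plan is to realise the reflector as a left adjoint to the inclusion $J:\KHLoc\to\HLoc$ by means of the Special Adjoint Functor Theorem. This requires three things: that $\KHLoc$ be complete with $J$ preserving all small limits, that $\KHLoc$ be well-powered, and that $\KHLoc$ possess a small cogenerating set.

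First I would verify that $\KHLoc$ is closed under small limits in $\HLoc$, which simultaneously yields completeness of $\KHLoc$ and preservation of limits by $J$. Since $\HLoc$ is reflective in $\Loc$ (Remark~\ref{x84}.5) it is closed under limits there, so it suffices to argue in $\Loc$. For products, the Tychonoff theorem for locales shows that a product of compact locales is compact; hence a product of objects of $\KHLoc$, formed in $\Loc$, is compact and strongly Hausdorff, and therefore lies in $\KHLoc$ and is the product there. For equalizers, the decisive point is strong Hausdorffness of the codomain: the equalizer of $f,g\colon A\rightrightarrows B$ is the preimage $(f,g)^{-1}(\bigtriangleup_B)$ of the diagonal, and since $\bigtriangleup_B$ is a closed sublocale by definition of $\HLoc$, this preimage is a closed sublocale of $A$ by (\ref{x100}). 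When $A$ is compact it is then a closed sublocale of a compact locale, hence compact, and it is strongly Hausdorff as a sublocale of a strongly Hausdorff locale. Thus $\KHLoc$ has all limits, computed as in $\HLoc$, and $J$ preserves them.

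For well-poweredness I would use that every subobject of $K\in\KHLoc$ is carried by a sublocale of $K$, and that the sublocales of $K$ form the set $\msl(K)$; hence each object has only a set of subobjects. For the cogenerating set I would first recall that a compact strongly Hausdorff locale is regular (Remark~\ref{x84}.4) and therefore continuous (as in the proof of Corollary~\ref{x30}); Hofmann--Lawson duality (Theorem~\ref{x101}) then identifies these locales with sober locally compact spaces, which in the compact case are exactly the compact Hausdorff spaces (sober gives $T_0$, and $T_0$ plus regular gives Hausdorff, as in the proof of Theorem~\ref{x77}). Under this identification the unit-interval locale $Lc([0,1])$ corresponds to the interval $[0,1]$, a cogenerator of the category of compact Hausdorff spaces (every compact Hausdorff space embeds in a power of $[0,1]$); hence $\{Lc([0,1])\}$ is a cogenerating set for $\KHLoc$. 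With these three ingredients in hand, the Special Adjoint Functor Theorem produces a left adjoint to $J$, that is, a reflection.

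The main obstacle is the cogenerating set: it is the one step that genuinely requires identifying $\KHLoc$ with compact Hausdorff spaces, and hence leans on classical logic (so that compact regular locales are spatial). If one prefers to avoid the cogenerator, the alternative is the General Adjoint Functor Theorem, whose solution-set condition can be checked by factoring any $f\colon L\to K$ through the closure $\overline{f[L]}$ of its image: this closure is a closed, hence compact, strongly Hausdorff sublocale of $K$ admitting a dense-image map from $L$, and the isomorphism classes of such locales form a set because their cardinality is bounded in terms of that of $L$ (a dense sublocale of a compact regular locale is completely regular, and $\overline{f[L]}$ is then a quotient of its Stone--\v{C}ech compactification). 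In either approach the delicate issue is the same, namely controlling the size of the compact strongly Hausdorff locales lying under a fixed $L$.
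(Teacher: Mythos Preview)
Your SAFT argument is correct but takes a genuinely different route from the paper. The paper simply cites known reflectivity results: $\KRLoc$ (compact regular locales) is reflective in $\CRLoc$ (completely regular locales) via the localic Stone--\v{C}ech compactification \cite[page~134]{PP}, and $\CRLoc$ is reflective in $\Loc$ \cite[page~94]{PP}; composing gives $\KRLoc$ reflective in $\Loc$, hence also in the intermediate full subcategory $\HLoc$. Since a compact strongly Hausdorff locale is regular (Remark~\ref{x84}.4), one has $\KRLoc=\KHLoc$, and the result follows in a few lines.

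Your approach is more self-contained and makes the existence of the reflector structurally transparent, but it leans on the equivalence of $\KHLoc$ with compact Hausdorff spaces (hence on spatiality of compact regular locales) both for the cogenerator and, implicitly, for well-poweredness: your claim that every subobject in $\KHLoc$ is carried by a sublocale is not immediate, since monomorphisms in $\Loc$ need not be sublocale embeddings, and the spatial identification is the cleanest way to patch this. The paper's route is shorter and stays entirely on the localic side, though it outsources the substantive work to the cited Stone--\v{C}ech construction.
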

\begin{proof}
	Let $\CRLoc$ be the category of completely regular locales defined as in \cite[Chapter V. 6.3]{PP}.
	By \cite[page 134]{PP}, the category  $\KRLoc$ of compact regular locales is a reflective subcategory of $\CRLoc$ and by \cite[Proposition in page 94]{PP}, $\CRLoc$ is reflective in $\Loc$. Thus,  $\KRLoc$ is reflective in $\Loc$ which is contained in $\HLoc$ by Remark \ref{x84}.3.  It follows that
	$\KRLoc$ is reflective in $\HLoc$.
	By Remark \ref{x84}.4,  $\KRLoc=\KHLoc$. Therefore $\KHLoc$ is reflective in $\HLoc$.
	 
\end{proof}	
 \begin{corollary}\label{x49} 
 Let $K$ and $K'$ be two strongly Hausdorff compact locales.
 Then the binary product $K\tm_{\HLoc} K'$ is a compact locale.
 \end{corollary}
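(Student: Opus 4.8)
The plan is to deduce the statement directly from the reflectivity of $\KHLoc$ in $\HLoc$ established in Proposition \ref{x102}, together with the standard categorical fact that a full reflective subcategory is closed under all limits that exist in the ambient category. In particular, no appeal to a localic Tychonoff theorem will be needed: the compactness of the product is forced by the fact that the product computed in $\HLoc$ cannot escape $\KHLoc$.

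First I would check that the product $K \tm_{\HLoc} K'$ actually exists. Since $\Loc$ is bicomplete and, by Remark \ref{x84}.5, $\HLoc$ is a reflective subcategory of $\Loc$, the category $\HLoc$ is complete; in particular binary products exist in $\HLoc$, so $P = K \tm_{\HLoc} K'$ is well defined. The same reasoning, applied one level down, shows that $\KHLoc$ is complete, so $K \tm_{\KHLoc} K'$ is also available.

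Next I would invoke the key structural fact: if $\mb$ is a full reflective subcategory of a category $\mc$ and $D$ is a diagram in $\mb$ whose composite with the inclusion has a limit in $\mc$, then that limit is isomorphic to an object of $\mb$ and realizes the limit of $D$ in $\mb$. Concretely, applying the reflector to the limiting cone and using the universal property of the limit shows that the reflection unit at the limit object is invertible. Applying this with $\mc = \HLoc$, $\mb = \KHLoc$ (reflective by Proposition \ref{x102}), and $D$ the two-object discrete diagram $\{K, K'\}$, I conclude that $P = K \tm_{\HLoc} K'$ is, up to isomorphism, an object of $\KHLoc$. Equivalently, the inclusion $\KHLoc \hookrightarrow \HLoc$, being a right adjoint, preserves the product, so $K \tm_{\HLoc} K' \cong K \tm_{\KHLoc} K'$ lies in $\KHLoc$.

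Since every object of $\KHLoc$ is by definition a compact (strongly Hausdorff) locale, it follows that $K \tm_{\HLoc} K'$ is compact, which is the claim. I do not expect any genuine obstacle here, as the entire content has been packaged into Proposition \ref{x102}; the only point requiring care is the direction of the comparison between the $\HLoc$- and $\KHLoc$-products, which the reflectivity settles by guaranteeing that the product formed in the larger category $\HLoc$ remains inside $\KHLoc$.
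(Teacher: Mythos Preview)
Your argument is correct and is essentially the same as the paper's: both deduce the result from Proposition~\ref{x102} by using that a full reflective subcategory is closed under (equivalently, its inclusion preserves) limits formed in the ambient category, so that $K \tm_{\HLoc} K' \cong K \tm_{\KHLoc} K' \in \KHLoc$. Your write-up simply makes explicit the existence of the relevant products and the general categorical principle that the paper invokes in one line.
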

 \begin{proof}
By Proposition \ref{x102}, the inclusion functor  $\KHLoc \to \HLoc$ preserves products.	Therefore 
\begin{equation}\label{x163}
K\tm_{\HLoc}\K'\cong K\tm_{\KHLoc}\K'\in \KHLoc .
\end{equation}
	
 \end{proof}	
 Let $L$ and $L'$ be two compactly generated strongly Hausdorff locales and let
 \begin{equation}\label{x164}
 J_L: \KHLoc/L \ra \HLoc 
 \end{equation}
 be the functor given by (\ref{x8}).
 Define $J_L\times_{\HLoc} L'$ to be the composite functor 
 \begin{equation}\label{x41}
 	\KHLoc/L \xra{J_L}\ \HLoc \xrightarrow{-\tm_{\HLoc}L'} \HLoc 
 \end{equation}
 We then have  
  \begin{equation} \label{x42}
 \begin{array}{rcll}
 	J_L\times_{\HLoc} L'(\sigma: K \sra L)&= &K\tm_{\HLoc}L'\\
 	&\cong&  K\tm_{\kHLoc}L' & \text{ by Lemma } \ref{x86}.
 \end{array}
\end{equation}
 Let 
 \begin{equation}\label{x43}  
 	\theta: J_L\tm_{\HLoc} L'\Ra L \tm_{\kHLoc} L'
 \end{equation}
 be the cone whose component along $ \sigma:K \sra L $ is the map 
 \begin{equation}\label{x44}  
 	\theta_{\sigma}: K\tm_{\HLoc}L' \cong K\tm_{\kHLoc}L' \xra{\sigma \tm_{kHLoc}1_{L'}}  L\tm_{\kHLoc}L'
 \end{equation}
 \begin{lemma}\label{x45} 
 	The cone $\theta$ given by (\ref{x43}) is colimiting.
 \end{lemma}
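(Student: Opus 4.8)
I need to show that the cone
\[
\theta: J_L\tm_{\HLoc} L'\Ra L \tm_{\kHLoc} L'
\]
of (\ref{x43}), whose component along $\sigma:K\sra L$ is $\theta_\sigma=\sigma\tm_{\kHLoc}1_{L'}$, is colimiting in $\HLoc$. This is precisely condition 3 in the definition of a closeable subcategory (Definition \ref{x91}), so establishing it is the crux of verifying that $\KHLoc$ is closeable and hence that $\kHLoc$ is cartesian closed.

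\paragraph{The plan.} The key observation is that $L$ itself is compactly generated, so by (\ref{x59}) the counit $\epsilon_L:T(L)=\colim j_L\to L$ is an isomorphism; equivalently $L\cong\colim J_L$, exhibited by the colimiting cone $\lambda_L:J_L\Ra L$ whose components are the structure maps $\sigma:K\to L$. The strategy is to transport this colimit through the functor $-\tm_{\kHLoc}L'$. Concretely, I would first recall that $\kHLoc$ is cocomplete (Proposition \ref{x87}, since $\HLoc$ is cocomplete) and that the coreflection $U\dashv F_T$ makes the inclusion $U:\kHLoc\to\HLoc$ a right adjoint that \emph{creates} the colimits $\HLoc$ admits (Proposition \ref{x87}.1). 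Thus computing $\colim(J_L\tm_{\HLoc}L')$ in $\HLoc$ is the same as computing it in $\kHLoc$, using the rewriting $K\tm_{\HLoc}L'\cong K\tm_{\kHLoc}L'$ from (\ref{x42}) supplied by Lemma \ref{x86}.

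\paragraph{The main step.} The heart of the argument is that the functor $-\tm_{\kHLoc}L':\kHLoc\to\kHLoc$ preserves the colimit $L\cong\colim_{(\sigma:K\to L)\in\KHLoc/L} K$. I would argue this by combining two facts. First, by Proposition \ref{x12} the reflector $R_L$ witnesses $\mk(L)$ as final in $\KHLoc/L$, so the colimit over $\KHLoc/L$ agrees with the colimit over the small diagram $\mk(L)$; this lets me work with an honest small colimit. Second, I would show that $-\tm_{\kHLoc}L'$ is a left adjoint, or at least cocontinuous, on $\kHLoc$: since $\kHLoc$ is cartesian closed is precisely what we are trying to prove, I cannot yet invoke that directly, so instead I would appeal to Lemma \ref{x86} repeatedly to rewrite each $K\tm_{\kHLoc}L'$ as the honest locale product $K\tm_{\HLoc}L'$ and then use that $U$ creates colimits together with the compatibility $\theta_\sigma=\sigma\tm_{\kHLoc}1_{L'}$ to check the universal property of the colimiting cone by hand against an arbitrary competing cone $\mu:J_L\tm_{\HLoc}L'\Ra M$.

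\paragraph{The obstacle.} The delicate point is exactly the preservation of the colimit by $-\tm_{\kHLoc}L'$, because at this stage of the paper the exponentiability of $L'$ in $\kHLoc$ is not yet available --- that is the very conclusion of the closeability program. I expect the resolution to run through the finality of $U_L$ (Proposition \ref{x12}) reducing everything to the small cofinal diagram $\mk(L)$, combined with the fact that in $\kHLoc$ the product $-\tm_{\kHLoc}L'$ is computed by applying the coreflector $F_T$ to the $\HLoc$-product (Proposition \ref{x87}.2) and that $F_T$, being a right adjoint coreflector onto a coreflective subcategory whose inclusion creates colimits, interacts controllably with the colimit defining $L$. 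The careful bookkeeping of which products are taken in $\HLoc$ versus $\kHLoc$, mediated by the isomorphisms of Lemma \ref{x86}, will be the main technical burden; once those identifications are in place, the universal property of $\theta$ should follow by transporting the universal property of $\lambda_L$.
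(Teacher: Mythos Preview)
You have correctly isolated the obstacle --- showing that $-\tm_{\kHLoc}L'$ preserves the colimit $L\cong\colim_{(\sigma:K\to L)}K$ without already knowing that $L'$ is exponentiable in $\kHLoc$ --- but your proposal does not actually resolve it. The suggestion that $F_T$ ``interacts controllably with the colimit'' points in the wrong direction: $F_T$ is a \emph{right} adjoint (the coreflector), so it preserves limits, not colimits, and there is no reason it should commute past the colimit defining $L$. Likewise, ``checking the universal property by hand'' against an arbitrary cone $\mu:J_L\tm_{\HLoc}L'\Ra M$ requires producing a comparison map $L\tm_{\kHLoc}L'\to M$, and for that you need precisely the preservation statement you are trying to establish. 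As written, the plan remains circular.

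The paper's argument avoids this trap by never invoking preservation of colimits by $-\tm_{\kHLoc}L'$ at all. It starts from the other end: by construction
\[
L\tm_{\kHLoc}L'\;\cong\;k(L\tm_{\HLoc}L')\;\cong\;T(L\tm_{\HLoc}L')\;\cong\;\colim j_{L\tm_{\HLoc}L'},
\]
a colimit indexed by the compact sublocales of the \emph{product} $L\tm_{\HLoc}L'$. One then introduces the functor $G:\mk(L)\tm\mk(L')\to\mk(L\tm_{\HLoc}L')$ given by $(K,K')\mapsto K\tm_{\HLoc}K'$, shows it is final (it has a left adjoint $Q\mapsto(p(Q),p'(Q))$, so Proposition~\ref{x5} applies), and thereby re-indexes the colimit as one over $\mk(L)\tm\mk(L')$. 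Fubini's theorem now lets one compute the inner colimit over $\mk(L')$ first: this is $\colim_{K'\in\mk(L')}(K\tm_{\HLoc}K')$, and the decisive point is that $K$ --- not $L'$ --- is exponentiable in $\HLoc$ (Corollary~\ref{x30}), so $K\tm_{\HLoc}-$ preserves that colimit and one obtains $K\tm_{\HLoc}L'$, using that $L'$ is compactly generated. The remaining outer colimit over $\mk(L)$ is then exactly the restriction of $\theta$ along the final functor $U_L$, whence $\theta$ is colimiting. The idea missing from your sketch is this swap of variables: one exploits the exponentiability of the compact object $K$, which is already available, rather than that of $L'$, which is not.
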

The proof of this lemma is deferred to the next section.

\begin{theorem}\label{x46} 
The category $\kHLoc$ is cartesian closed.
	\end{theorem}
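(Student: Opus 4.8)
The plan is to realize $\kHLoc$ as the category $\mw_l[\mc]$ of $\mw$-generated objects for the choices $\mw=\KHLoc$ and $\mc=\HLoc$, and then to quote Theorem \ref{x95}. The category $\HLoc$ is bicomplete, and by Proposition \ref{x7} together with the definition (\ref{x156}), $\KHLoc$ is a left Kan extendable subcategory of $\HLoc$ with $\kHLoc=(\KHLoc)_l[\HLoc]$. Consequently it suffices to verify that $\KHLoc$ is \emph{closeable} in the sense of Definition \ref{x91}, that is, to check its four conditions, after which Theorem \ref{x95} delivers cartesian closure directly, with internal hom given by the composite (\ref{x94}), so that $M^L=F_L(\hom(L,M))$.

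First I would dispatch the two ``pointwise'' conditions. Condition~1 of Definition \ref{x91} asks that every object of $\KHLoc$ be exponentiable in $\HLoc$; this is precisely Corollary \ref{x30}, which identifies strongly Hausdorff compact locales as exponentiable in $\HLoc$ via Hyland's theorem and Johnstone's proposition. Condition~2 asks that $K\times_{\HLoc}K'\in\kHLoc$ for all $K,K'\in\KHLoc$: by Corollary \ref{x49} this product is again a compact strongly Hausdorff locale, hence an object of $\KHLoc$, and every object of $\KHLoc$ is $\KHLoc$-generated (it is the colimit of the one-object diagram selecting it, by Corollary \ref{x96}), so $K\times_{\HLoc}K'\in\kHLoc$.

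The remaining two conditions are exactly the lemmas assembled earlier in this section. Condition~3, that for all $L,L'\in\kHLoc$ the cone $\theta\colon J_L\times_{\HLoc}L'\Ra L\times_{\kHLoc}L'$ of (\ref{x43}) is colimiting, is Lemma \ref{x45}. Condition~4, that for all $L,M\in\kHLoc$ the functor $S^L_M$ of (\ref{x33}) admits a limit, is Lemma \ref{x32}, which rests on the finality of $U_L$ (Proposition \ref{x12}), the colimit-existence for finally small diagrams (Proposition \ref{x72}), and the limit--colimit duality. With all four conditions in hand, $\KHLoc$ is a closeable left Kan extendable subcategory of the bicomplete category $\HLoc$, and Theorem \ref{x95} concludes that $\kHLoc$ is cartesian closed.

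The main obstacle is not in this assembly but is isolated in Lemma \ref{x45} (condition~3), whose proof is deferred to the next section. Establishing that $\theta$ is colimiting is the genuinely substantive point: one must reconstruct the product $L\times_{\kHLoc}L'$ as the colimit over $\KHLoc/L$ of the slice products $K\times_{\HLoc}L'$, which requires controlling the interaction of binary products in $\HLoc$ with the $\KHLoc$-generation of $L$ and exploiting the finality of $U_L$. By contrast, conditions~1, 2 and 4 are comparatively routine once the locale-theoretic inputs---exponentiability of compact strongly Hausdorff locales and reflectivity of $\KHLoc$ in $\HLoc$ (Proposition \ref{x102})---are available.
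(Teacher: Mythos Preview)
Your proposal is correct and follows essentially the same route as the paper: verify the four conditions of Definition \ref{x91} by invoking Corollary \ref{x30}, Corollary \ref{x49}, Lemma \ref{x45}, and Lemma \ref{x32}, and then apply Theorem \ref{x95}. The paper's proof is a one-line citation of exactly these four results; your version simply spells out which result discharges which condition and adds the (correct) observation that $\KHLoc\subseteq\kHLoc$ to close the gap between Corollary \ref{x49} and condition~2.
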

	\begin{proof}
		By  Corollaries \ref{x30}, \ref{x49} and lemmas \ref{x32},	\ref{x45},  the subcategory $\KHLoc$ of  $\HLoc$ is closeable. 
		By Theorem \ref{x95}, the category $\kHLoc$ is cartesian closed. 
	\end{proof}

\section{The proof of Lemma \ref{x45}}\label{s7}
The proof of Lemma \ref{x45} relies on Fubini's theorem which is stated in the literature under its (co)end form (\cite[Proposition on page 230]{ML} and \cite[Theorem 1.3.1]{LF}). 
\begin{theorem}\label{x143}(Fubini's theorem)\\
Let $B: \mi \tm \mj \to \mc$ be a functor such that the partial functor

\begin{equation}\label{x144}
 B(i,-):\mj \to \mc	 
\end{equation}
has a colimit for all $i\in \mi$. Let 
 \begin{equation}\label{x145}
 	\hat{B}: \mi \to \mc  
 \end{equation}
  be the functor defined by	$\hat{B}(i)= \colim B(i,-)$ and let
\begin{equation}\label{x56}
		\lambda_i:B(i,-) \Longrightarrow \hat{B}(i)
\end{equation}
be a colimiting cone.
Then
\begin{enumerate}
\item
$B$ has a colimit iff $\hat{B}$ has a colimit.
 \item Assume that $B$ has a colimit and let
 \begin{equation}\label{x146}
 	\lambda: B \Longrightarrow \colim B 
 \end{equation}
 be a colimiting cone. For $i\in \mi$, the cone $\lambda$ induces a cone
   \begin{equation}\label{x147}
    \lambda_{(i,-)}:B(i,-)\Longrightarrow \colim B.
   \end{equation}
   Let
    \begin{equation}\label{x148}
   	\hat{\lambda}_{i}: \hat{B}(i) \ra \colim B
   \end{equation}
    be the unique map rendering the following  diagram commutative
	\begin{equation}\label{x57} 
\begin{tikzcd}
B(i,-)  \ar[rr, Rightarrow,"\lambda_{(i,-)}" ] 
\ar[dr, Rightarrow, "\lambda_i"']&& \colim B\\
&\hat{B}(i)  \ar[ur, "\hat{\lambda}_{i}"']   
	\end{tikzcd}
\end{equation}
Then the cone 
\begin{equation}\label{x58}
	\hat{\lambda}:\hat{B} \Longrightarrow \colim B
\end{equation}
whose components are
the maps $\hat{\lambda}_{i}$ 	
 is colimiting. In particular,  we have
 
 \begin{equation}\label{x149}
    \underset{(i,j)  \in \mi \tm \mj}{\colim}B(i,j)\cong  \underset{i\in \mi }{\colim} \; \underset{j\in \mj } {\colim}B(i,j)
 \end{equation}

 \end{enumerate}	
\end{theorem}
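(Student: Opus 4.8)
The plan is to prove both assertions simultaneously by producing, for each object $C \in \mc$, a bijection---natural in $C$---between the cones $\mu: B \Ra C$ and the cones $\nu: \hat{B} \Ra C$, and then reading off the theorem from the fact that a colimit is exactly a representing object for the functor sending $C$ to the set of cones out of the given diagram.

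First I would fix $C \in \mc$ and set up the correspondence. Given a cone $\mu: B \Ra C$, for each fixed $i \in \mi$ the components $\mu_{(i,j)}$ ($j \in \mj$) assemble into a cone $\mu_{(i,-)}: B(i,-) \Ra C$; since $\lambda_i$ is colimiting there is a unique $\bar{\mu}_i: \hat{B}(i) \ra C$ with $\bar{\mu}_i (\lambda_i)_j = \mu_{(i,j)}$ for all $j$. Conversely, from a cone $\nu: \hat{B} \Ra C$ I would define $\mu_{(i,j)} = \nu_i (\lambda_i)_j$. The crux is to check that $i \mapsto \bar{\mu}_i$ is genuinely a cone $\hat{B} \Ra C$, i.e. that $\bar{\mu}_{i'} \hat{B}(\alpha) = \bar{\mu}_i$ for every $\alpha: i \ra i'$ in $\mi$. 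By the uniqueness clause in the universal property of $\lambda_i$ it suffices to show the two sides agree after precomposition with each $(\lambda_i)_j$; unwinding the defining relation $\hat{B}(\alpha)(\lambda_i)_j = (\lambda_{i'})_j B(\alpha, 1_j)$ reduces this to $\mu_{(i',j)} B(\alpha, 1_j) = \mu_{(i,j)}$, which is precisely naturality of the cone $\mu$ along the morphism $(\alpha, 1_j): (i,j) \ra (i',j)$ of $\mi \tm \mj$. An analogous computation, using the naturality of $\lambda_{i'}$ in the $\mj$-variable together with the cone condition on $\nu$, shows that $\nu \mapsto \mu$ lands in cones on $B$; a short calculation then confirms that the two assignments are mutually inverse and natural in $C$.

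With this natural bijection in hand, both parts follow formally. For part 1, $B$ has a colimit iff the functor $C \mapsto \{\text{cones } B \Ra C\}$ is representable; the bijection identifies it with the corresponding functor for $\hat{B}$, so $B$ has a colimit iff $\hat{B}$ does, and the representing objects agree. For part 2, supposing $\colim B$ exists with colimiting cone $\lambda$, I would feed $\mu = \lambda$ into the correspondence (with $C = \colim B$): the resulting cone $\bar{\lambda}$ on $\hat{B}$ is characterized by $\bar{\lambda}_i (\lambda_i)_j = \lambda_{(i,j)}$, which is exactly the condition defining $\hat{\lambda}_i$ in diagram (\ref{x57}). Thus $\hat{\lambda} = \bar{\lambda}$, and since the bijection is natural in $C$ it carries the universal cone $\lambda$ to a universal cone; hence $\hat{\lambda}$ is colimiting. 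The displayed formula $\colim_{(i,j)} B(i,j) \cong \colim_i \colim_j B(i,j)$ is then the induced isomorphism of representing objects.

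The step I expect to be the main obstacle is the cone condition on $\bar{\mu}$, i.e. the naturality square $\bar{\mu}_{i'} \hat{B}(\alpha) = \bar{\mu}_i$. This is the one point where the two directions of naturality of $\mu$ (in $\mi$ and in $\mj$) must be combined with the uniqueness half of a universal property, and it is delicate precisely because $\hat{B}(\alpha)$ is itself only available through that universal property, forcing the argument to proceed by uniqueness rather than by direct computation. The remaining ingredients---mutual inverseness, naturality in $C$, and the passage to representing objects---are routine bookkeeping.
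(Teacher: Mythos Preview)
Your argument is correct and is essentially the standard proof: the natural bijection between cones $B\Ra C$ and cones $\hat{B}\Ra C$ is exactly what underlies the classical Fubini-type interchange of colimits, and you have correctly isolated the only nontrivial step (the cone condition on $\bar{\mu}$, handled via uniqueness after precomposition with the colimiting cone $\lambda_i$).

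However, there is nothing to compare against: the paper does not supply its own proof of this theorem. It is stated as a known result, with references to Mac~Lane \cite[Proposition on page~230]{ML} and Loregian \cite[Theorem~1.3.1]{LF} for the (co)end formulation, and is then used as a black box in the proof of Lemma~\ref{x45}. Your write-up therefore provides more than the paper does on this point; the approach you take is the one found in those references.
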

Let $L$ and $L'$ be two compactly generated strongly Hausdorff locales. Define a functor 
	 \begin{equation}\label{x52}
	 G:\mk(L) \tm \mk(L') \ra  \mk(L\tm_{\HLoc} L')
 	 \end{equation}
 	 by $G((K,K'))= K\tm_{\HLoc} K'$.

	\begin{lemma}\label{x51} 
	The functor $G$	given by (\ref{x52}) is final. 
	\end{lemma}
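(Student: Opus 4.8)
The plan is to establish finality of $G$ through criterion (6) of Proposition \ref{x69}. Since $\mk(L)\tm\mk(L')$ and $\mk(L\tm_{\HLoc}L')$ are thin categories (posets of sublocales ordered by inclusion), the comma category $C/G$ attached to a compact sublocale $C\in\mk(L\tm_{\HLoc}L')$ is itself thin; its objects are the pairs $(K,K')\in\mk(L)\tm\mk(L')$ with $C\subseteq G(K,K')=K\tm_{\HLoc}K'$, the unique arrow $C\ra G(K,K')$ being the inclusion. Thus it suffices to show that each such $C/G$ has a least element, hence an initial object. By the argument used in Proposition \ref{x5}, a category with an initial object is connected, which is precisely the connectedness required by Proposition \ref{x69}.6 and therefore, by Definition \ref{x70}, yields the finality of $G$.

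To build the candidate initial object I would use the projections $p_L:L\tm_{\HLoc}L'\ra L$ and $p_{L'}:L\tm_{\HLoc}L'\ra L'$. Given $C\in\mk(L\tm_{\HLoc}L')$, set $K_C=p_L(C)$ and $K_C'=p_{L'}(C)$. Since the localic image of a sublocale is again a sublocale (as recalled around (\ref{x127})) and the localic image of a compact locale is compact, while every sublocale of a strongly Hausdorff locale is strongly Hausdorff, the objects $K_C$ and $K_C'$ are compact strongly Hausdorff sublocales; that is, $(K_C,K_C')\in\mk(L)\tm\mk(L')$.

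The next step is to verify that $(K_C,K_C')$ genuinely lies in $C/G$, i.e. that $C\subseteq K_C\tm_{\HLoc}K_C'$. The corestrictions of the two projections give localic maps $C\ra K_C$ and $C\ra K_C'$ whose composites with the inclusions into $L$ and $L'$ recover $p_L|_C$ and $p_{L'}|_C$; by the universal property of the product $K_C\tm_{\HLoc}K_C'$ they induce a map $u:C\ra K_C\tm_{\HLoc}K_C'$. Comparing the two projections of $L\tm_{\HLoc}L'$ shows that the composite of $u$ with the inclusion $K_C\tm_{\HLoc}K_C'\hra L\tm_{\HLoc}L'$ equals the inclusion of $C$. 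Since containment of sublocales is exactly factorization of their inclusions, this gives $C\subseteq K_C\tm_{\HLoc}K_C'$.

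Finally I would check minimality: let $(K,K')$ be any object of $C/G$, so $C\subseteq K\tm_{\HLoc}K'$. Restricting $p_L$ to $K\tm_{\HLoc}K'$ factors through $K$ by the universal property of the product, hence $p_L|_C$ factors through $K\hra L$; as the image $p_L(C)$ is the smallest sublocale through which $p_L|_C$ factors, we obtain $K_C=p_L(C)\subseteq K$, and symmetrically $K_C'\subseteq K'$. By thinness this is the unique morphism $(K_C,K_C')\ra(K,K')$ in $C/G$, so $(K_C,K_C')$ is initial and $C/G$ is connected. The main obstacle is the third step: converting the evident set-theoretic containment $C\subseteq p_L(C)\tm p_{L'}(C)$ into a statement about sublocales of a point-free product, using universal properties together with the factorization characterization of sublocale inclusion, and invoking that localic images preserve both the sublocale condition and compactness; everything else is formal poset and comma-category bookkeeping.
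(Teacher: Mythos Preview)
Your proof is correct and follows essentially the same approach as the paper: the paper defines $F(Q)=(p(Q),p'(Q))$, asserts that $F\dashv G$, and then invokes Proposition~\ref{x5}, while you unpack this adjunction by directly exhibiting $(p_L(C),p_{L'}(C))$ as the initial object of each comma category $C/G$, which is precisely the poset form of the unit of that adjunction. The only difference is presentational---you verify in detail the inclusion $C\subseteq p_L(C)\tm_{\HLoc}p_{L'}(C)$ and the minimality, whereas the paper leaves these to the reader under the single phrase ``$F$ is a left adjoint of $G$''.
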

	
	\begin{proof} 
	Let p: $L\tm_{\HLoc} L' \ra L$ and p': $L\tm_{\HLoc} L' \ra L'$ be the projections. 
	Let 
	\begin{equation}\label{x53}
		F:   \mk(L\tm_{\HLoc} L') \ra \mk(L) \tm \mk(L')  
	\end{equation}
	be the functor given by
	\begin{equation}\label{x104}
		F(Q)=(p(Q),p'(Q)), \quad Q \in \mk( L\tm_{\HLoc} L')
	\end{equation}
The functor $F$ is a left adjoint of $G$. By Proposition \ref{x5}, $G$ is final.

\end{proof}
\noindent\textbf{The proof of Lemma \ref{x45}:}\\

Let $k: \HLoc \ra \kHLoc$ be the coreflection given by (\ref{x59}).
Let  $L,L' \in \kHLoc$.
$$
\begin{array}{rclr}
	L\tm_{\kHLoc}L' & \cong &k(L\tm_{\HLoc}L')& \text{because }  k  \text{ preserves products}\\
	& \cong &T(L\tm_{\HLoc}L')& \text{ where } T \text{ is the density comonad of }  J \\
	&\cong&\colim j_{L\tm_{\HLoc}L'}& \text{by }(\ref{x105}) \\
	&\cong&\colim j_{L\tm_{\HLoc}L'}G& \text{because by Lemma   \ref{x51}, }  G  \text{ is final}\\
	&\cong&\colim \psi & \text{where } \psi \text{ is the composite } \psi = j_{L\tm_{\HLoc}L'}G
\end{array}
$$
Fix $K\in \mk(L)$.  The partial functor 

\begin{equation}\label{x62}
	\psi(K,-): \mk(L') \to \HLoc 
\end{equation}
is the composite
\begin{equation}\label{x63}
	\mk (L') \st{j_{L'}}\to\HLoc \xra{K\tm_{\HLoc }-} \HLoc.
\end{equation}
The locale $L'$ is compactly generated strongly Hausdorff, therefore $\colim j_{L'}\cong L'$. The object  
$K$  is exponentiable in $\HLoc$, thus the functor
\begin{equation}\label{x64}
K\tm_{HLoc}- : \HLoc \to \HLoc
\end{equation}
commutes with colimits. It follows that $\colim \psi(K,-)$ has a colimit and
\begin{equation}\label{x65}
\colim \psi(K,-) \cong K\tm_{\HLoc }L'.
\end{equation}
Let
\begin{equation}\label{x66}
	\hat{\psi}: \mk(L) \to \HLoc 
\end{equation}
be the functor given by
\begin{equation}\label{x67}
	\hat{\psi}(K)= \colim \psi(K,-) \cong K\tm_{\HLoc }L'.
\end{equation}
By Fubini's theorem \ref{x143},  $\hat{\psi}$ has a colimit and 
\begin{equation}\label{x68}
\colim \hat{\psi} \cong \colim \psi \cong  L\tm_{\kHLoc}L'.
\end{equation}
Moreover, the cone 
\begin{equation}\label{x106}
 \hat{\psi} \Ra  L\tm_{\kHLoc}L'.
\end{equation}
whose component along an object $K \in \mk(L)$ is the inclusion map
\begin{equation}\label{x107}
	  K\tm_{\HLoc }L' =  K\tm_{\kHLoc }L'\ra L\tm_{\kHLoc}L'
\end{equation}
is colimiting. The cone (\ref{x106}) is the restriction of the cone (\ref{x43}) along the final functor $U_L$ given by (\ref{x6}). Therefore the cone  (\ref{x43}) is colimiting.



\end{document}